\newtheorem{Theorem}{Theorem}[section]
\newtheorem{Lemma}[Theorem]{Lemma}
\newtheorem{Corollary}[Theorem]{Corollary}
\newcommand\supp{\mathop{\rm supp}}
\newcommand\sign{\mathop{\rm sign}}
\theoremstyle{definition}
\begin{document}

\title{$M$-ideal properties in Orlicz-Lorentz spaces}
\keywords{$M$-ideals, Orlicz-Lorentz spaces, dual norm}
\subjclass[2010]{46B20, 46E30, 47B38}

\author{Anna Kami\'nska}
\address{Department of Mathematical Sciences,
The University of Memphis, TN 38152-3240}
\email{kaminska@memphis.edu}

\author{Han Ju Lee}
\address{Department of Mathematical Education,
Dongguk University, Seoul, 100-715,  Republic of Korea}
\email{hanjulee@dongguk.edu}

\author{Hyung-Joon Tag}
\address{Department of Mathematical Sciences,
The University of Memphis, TN 38152-3240}
\email{htag@memphis.edu}

\date{\today}

\begin{abstract}
We provide  explicit formulas for the norm of  bounded linear functionals on Orlicz-Lorentz function spaces $\Lambda_{\varphi,w}$ equipped with two standard Luxemburg and Orlicz norms. Any bounded linear functional is a sum of regular and singular functionals, and we show that the norm of a singular  functional is the same regardless of the norm in the space, while  the formulas of the norm of  general functionals  are different for the Luxemburg and Orlicz norm. The relationship between equivalent definitions of the modular $P_{\varphi,w}$ generating the dual space to Orlicz-Lorentz space is discussed in order to compute the norm of a bounded linear functional on $\Lambda_{\varphi,w}$ equipped with Orlicz norm. As a consequence, we show that the order-continuous subspace of Orlicz-Lorentz space equipped with the Luxemburg norm is an $M$-ideal in $\Lambda_{\varphi,w}$, while this is not true for the space with the Orlicz norm  when $\varphi$ is an Orlicz $N$-function not satisfying the appropriate $\Delta_2$ condition. The analogous results on Orlicz-Lorentz sequence spaces are given.
\end{abstract}

\maketitle

\section{Introduction}

A closed subspace $Y$ of a Banach space $X$ is an $M$-ideal of $X$ if $Y^{\perp}$ is the range of the bounded projection $P:X^* \rightarrow X^*$ which satisfies
\[
\|x^*\| = \|Px^*\| + \|x^* - Px^*\| \ \ \ \text{for all}\ \ x^* \in X^*.
\]
 If $Y$ is an $M$-ideal in $X$, then each $y^* \in Y^*$ has a unique norm-preserving extension to $x^* \in X^*$\cite{HWW}.
It is well known that $c_0$ is an $M$-ideal in $l^\infty$. The $M$-ideal properties in Marcinkiewicz spaces have been studied in \cite{KL}. It was shown there that the subspace of order-continuous elements in $L^1 + L^{\infty}$ equipped with the standard norm is not an $M$-ideal, while there exists an equivalent norm such that this subspace is an $M$-ideal.  For Orlicz spaces $L_\varphi$ it is well known that the order-continuous subspace of $L_{\varphi}$ is an $M$-ideal if the space is equipped with the Luxemburg norm \cite{A, N}, while this is not true if the space is equipped with the Orlicz norm and if $\varphi$ does not satisfy the appropriate $\Delta_2$ conditions \cite{CH}. For more details of general $M$-ideal theory and their applications, we refer to \cite{HWW}.

In this article, we investigate Orlicz-Lorentz function and sequence spaces. While we obtain analogous results as in Orlicz spaces, the techniques are different and the calculations are more involved since there is  necessity to deal with decreasing rearrangements and level functions, and the K\"othe associate spaces to Orlicz-Lorentz spaces are not of the same sort  as in the case of Orlicz spaces. The exact isometric dual norm for regular functionals in Orlicz-Lorentz spaces  has been recently found in \cite{KLR} and it is expressed in terms of the Hardy-Littlewood order and the level functions. This paper completes the topic of characterization of the dual spaces by providing   exact formulas of dual norms to Orlicz-Lorentz spaces equipped with two standard Luxemburg and Orlicz norms.

Denote by $L^0 = L^0(I)$ the set of all Lebesgue measurable functions $f: I= [0, \gamma) \to \mathbb{R}$, where $0 < \gamma \leq \infty$. If $I=\mathbb{N}$ then $\ell^0 = L^0(\mathbb{N})$ denotes the collection of all real valued sequences $x = (x(i))$.  The interval $I=[0,\gamma)$ is equipped with the Lebesgue measure $m$, and the space $\ell^0 = L^0(\mathbb{N})$ with the counting measure $| \cdot |$. A Banach space $(X, \| \cdot \|)$ over $I$ is said to be a Banach function lattice if $X \subset L^0(I)$ and whenever $0 \leq x \leq y$,  $x \in L^0(I)$, $y \in X$, then $x \in X$ and $0 \leq \|x\| \leq \|y\|$. If $I=[0,\gamma)$ then $X$ is called a Banach function space, while if $I=\mathbb{N}$ then $X$ is called a Banach sequence space.

We say that a Banach function lattice $(X, \|\cdot\|)$ has the Fatou property provided that for every sequence $(x_n) \subset X$, if $x_n \uparrow x$ a.e. for $x \in L^0$  and $\sup_n \|x_n\| < \infty$, then $x \in X$ and $\|x_n\| \uparrow \|x\|$.  An element $x \in X$ is order-continuous if for any $0 \leq x_n \leq |x|$, if $x_n \downarrow 0$ a.e., then $\|x_n\| \downarrow 0$. The set of all order-continuous elements in $X$ is a closed subspace of $X$ and is denoted by $X_a$. We also define a subspace $X_b$ which is
the closure of the set of all simple functions with supports of finite measure. In general, $X_a \subset X_b$ \cite{BS}.

The K\"othe associate space of $X$, denoted by $X'$, is a subset of  $L^0(I)$, where $I = [0, \gamma)$, $0 < \gamma \leq \infty$, or $I = \mathbb{N}$ consisting of all $y \in X'$ satisfying $\|y\|_{X'}=\sup\{\int_I |xy|: \|x\|_X \leq 1\} < \infty$.  The space $X'$ equipped with the norm $\|\cdot\|_{X'}$ is a Banach function lattice. It is well known that $X$ has the Fatou property if and only if $X=X''$ \cite{Z}.

We say that a bounded linear functional $H \in X^*$  is regular if there exists $h\in X'$ such that  $H(x) = \int_I hx$  for all $x \in X$.  The set of all regular linear functionals from $X^*$ will be denoted by $X_r^*$.
 In the case where $X_a=X_b$ and $X$ has the Fatou property, we have that $(X_a)^*$  is isometric to $X'$, and so $X^* = (X_a)^* \oplus  (X_a)^\perp$ is isometric to $X' \oplus (X_a)^\perp$. The set $(X_a)^\perp$ is called the space of singular functionals and it coincides with those $S\in X^*$ for which $S(x) = 0$ for all $x\in X_a$. It follows that any $F\in X^*$ is represented uniquely as the sum $H+S$  where $H$ is a regular functional and $S$ a singular functional \cite{Z}.

A distribution function $d_x$ of $x \in X$ is defined by $d_x(\lambda) = \mu\{t \in I : |x(t)| > \lambda\}$, $\lambda > 0$, where $\mu = m$ is the Lebesgue measure on $I = [0, \gamma)$, $0 < \gamma \leq \infty$ and the counting measure on $I = \mathbb{N}$.  The decreasing rearrangement of $x$, denoted by $x^*$, is given as $x^*(t) = \inf\{\lambda > 0: d_x(\lambda) \leq t\}$, $t \in [0, \gamma)$. For a sequence $x=(x(i))$, its decreasing rearrangement $x^*$ may be identified with the sequence $ (x^*(i))$ such that $x^*(i) =  \inf\{\lambda > 0: d_x(\lambda) < i\}$ for $i \in \mathbb{N}$. The functions $x, y$ are said to be equimeasurable if $d_x(\lambda) = d_y(\lambda)$ for all $\lambda > 0$, denoted by $x \sim y$. It is clear that $x$ and $x^*$ are equimeasurable. A Banach function lattice $(X, \|\cdot\|)$ is called a rearrangement invariant Banach space if $x \in X$ and $y \in L^0$ with $x \sim y$, we have $y \in X$ and $\|x\| = \|y\|$.

An Orlicz function $\varphi: \mathbb{R}_{+} \rightarrow \mathbb{R}_{+}$ is a convex function such that $\varphi(0) = 0$ and $\varphi(t) > 0$ for $t >0$. It is said to be an Orlicz $N$-function when  $\lim_{t \rightarrow 0}{\varphi(t)}/{t} = 0$ and $\lim_{t \rightarrow \infty} {\varphi(t)}/{t} = \infty$ \cite{Chen}. The complementary function of $\varphi$, denoted by $\varphi_*$, is defined as $\varphi_*(v) = \sup\{uv - \varphi(u): u \geq 0\}$, $v\ge 0$. We have that $\varphi$ is $N$-function if and only if $\varphi_*$ is $N$-function.
Let $p$ and $q$ stand for the right derivatives of $\varphi$ and $\varphi_*$, respectively.  The functions $p$ and $q$ are non-negative, right-continuous and  increasing  on $\mathbb{R}_+$.   If $\varphi$ is $N$-function then  $p(0)= p(0+)= q(0) = q(0+)=0$ and $\lim_{t \rightarrow \infty}p(t)=\lim_{t \rightarrow \infty}q(t)= \infty$. Clearly  for $\varphi$ and $\varphi_*$,  Young's inequality is satisfied, that is, $uv \leq \varphi(u) + \varphi_*(v)$ for all $u,v \in \mathbb{R_+}$.  Recall also that the equality holds  for $v = p(u)$ or $u=q(v)$ \cite{Chen}.

 Let  $w:  I=[0,\gamma)\rightarrow (0, \infty)$ be a weight function that is decreasing and locally integrable. Then we define $W(t): = \int_0^t w < \infty$ for all $t \in I$.  If $\gamma = \infty$,  we assume  $W(\infty) = \infty$.  Given $f \in L^0$,  define the modular
\[
\rho_{\varphi,w}(f) = \int_0^{\gamma} \varphi(f^*(t))w(t)dt = \int_I \varphi(f^*)w.
\]
The modular $\rho_{\varphi,w}$ is orthogonally subadditive,  that is, for $f, g \in L^0$, if $|f| \wedge |g| = 0$, we have $\rho_{\varphi,w}(f + g) \leq \rho_{\varphi,w}(f) + \rho_{\varphi,w}(g)$ \cite{K}. The Orlicz-Lorentz function space $\Lambda_{\varphi,w}$ is the set of all $f \in L^0$ such that $\rho_{\varphi,w}(\lambda f) < \infty$ for some $\lambda > 0$.
It is equipped with either the Luxemburg norm
\[
\|f\| = \|f\|_{\Lambda_{\varphi,w}} = \inf\{\epsilon > 0 : \rho_{\varphi,w}\left({f}/{\epsilon}\right) \leq 1\},
\]
 or the Orlicz norm
\[
\|f\|^0 = \|f\|_{\Lambda_{\varphi,w}}^0 = \sup\left\{\int_I f^*g^*w : \rho_{\varphi_*, w}(g) \leq 1\right\}.
\]
 It is well known that $\|x\| \leq \|x\|^0 \leq 2\|x\|$ \cite{WC, WR}. From now on, we let $\Lambda_{\varphi,w}$ be the Orlicz-Lorentz function space equipped with the Luxemburg norm $\|\cdot\|$ and $\Lambda_{\varphi,w}^0$ be the Orlicz-Lorentz function space equipped with the Orlicz norm $\|\cdot\|^0$. The spaces $\Lambda_{\varphi,w}$ and $\Lambda_{\varphi,w}^0$ are rearrangement invariant Banach spaces. Also, it is well known that $(\Lambda_{\varphi,w})_a = (\Lambda_{\varphi,w})_b = \{x \in \Lambda_{\varphi,w} : \rho_{\varphi,w}(\lambda x) < \infty \ \text{for all} \ \lambda > 0\}$ \cite{K}.

 In the case of sequence spaces let $w = (w(i))$ be a positive decreasing real sequence and $W(n) = \sum_{i=1}^n w(i)$ for all $n \in \mathbb{N}$ and $W(\infty)= \infty$. For a sequence $x \in \ell^0$, we define the modular $\alpha_{\varphi,w}(x)
 = \sum_{i=1}^{\infty} \varphi(x^*(i))w(i) $ and then the Orlicz-Lorentz sequence space $\lambda_{\varphi,w}$ is the set of all real sequences $x= (x(i))$ satisfying $\alpha_{\varphi,w}(\eta x)< \infty$ for some $\eta >0$.  The Luxemburg  and the Orlicz norm on $\lambda_{\varphi,w}$ are defined similarly as in the function case where the modular $\rho_{\varphi,w}$ is replaced by $\alpha_{\varphi,w}$, and    $\lambda_{\varphi, w}$ denotes the Orlicz-Lorentz sequence space equipped with the Luxemburg norm, and $\lambda_{\varphi,w}^0$ with the Orlicz norm.  The both norms are equivalent and the spaces are  rearrangement invariant Banach spaces. We also have $(\lambda_{\varphi,w})_a = (\lambda_{\varphi,w})_b = \{x \in \lambda_{\varphi,w} : \alpha_{\varphi,w}(\eta x) < \infty \ \text{for all} \ \eta >0\}$ \cite{KR2}.

  An Orlicz function $\varphi$ satisfies $\Delta_2$ (resp., $\Delta_2^{\infty}$; $\Delta_2^0$) condition if there exist $K > 0$ (resp., $K>0$ and $u_0\geq 0$; $K>0$ and $u_0 >0$) such that $\varphi(2u) \leq K\varphi(u)$ for all $u \geq 0$ (resp., $u \geq u_0$; $0< u \leq u_0$). {\it Appropriate $\Delta_2$ condition} means $\Delta_2$ and $\Delta_2^\infty$ in the case of the function spaces for $\gamma = \infty$ and $\gamma<\infty$, respectively, and $\Delta_2^0$ for the sequence spaces. It is well known that $(\Lambda_{\varphi,w}^0)_a = \Lambda_{\varphi,w}^0$ and $(\lambda_{\varphi,w}^0)_a = \lambda_{\varphi,w}^0$ if and only if $\varphi$ satisfies the appropriate $\Delta_2$ conditions \cite{K}.

If $f \in \Lambda_{\varphi,w}$ then for some  $\lambda_0 > 0$,  $\rho_{\varphi,w}(\lambda_0 f) < \infty$, and so for any $\lambda > 0$,
$\infty> \rho_{\varphi,w}(\lambda_0 f)  \geq \varphi(\lambda_0 \lambda) \int_0 ^{m\{f^* > \lambda\}} w.$
It follows from $W(\infty) = \infty$ that $d_f(\lambda) = m\{f^* > \lambda\} < \infty$ for every $\lambda > 0$. The similar fact holds for $x\in \lambda_{\varphi,w}$ by $\lambda_{\varphi,w} \subset c_0$.

Let us define $k^* = k^*(f) = \inf\{k>0: \rho_{\varphi_*, w}(p(kf)) \geq 1\}$ and $k^{**} = k^{**}(f) = \sup\{k>0: \rho_{\varphi_*, w}(p(kf)) \leq 1\}$. Clearly $0\le k^* \le k^{**} \le \infty$. If $\varphi$ is $N$-function then $k^{**} < \infty$. Indeed if for a contrary $k^{**} = \infty$, then there exists a non-negative sequence $(k_n)$ such that  $k_n \uparrow \infty$ and $\int_I \varphi_*(p(k_nf)^*)w \leq 1$. Hence for $t_0 = m\{f^* > 1\} < \infty$,
\[
\varphi_*(p(k_n))W(t_0) = \int_0^{t_0}\varphi_*(p(k_n))w =  \int_0^{m\{f^* >1\}} \varphi_*(p(k_n))w \leq \int_I \varphi_*(p(k_n f^*)w \leq 1.
\]
 This implies that $\varphi_*(p(k_n))/p(k_n) \leq 1/W(t_0)p(k_n)$, where the left side tends to $\infty$ since $\varphi_*$ is $N$-function, and the right side approaches $0$ since  $p(k_n) \rightarrow \infty$. This contradiction proves the claim. We define $k^*$ and $k^{**}$ analogously for Orlicz-Lorentz sequence spaces. Set $K(f) = [k^*, k^{**}]$ if $f\in \Lambda_{\varphi,w}$, and similarly $K(x)$ for $x\in\lambda_{\varphi,w}$.

  Recall the following facts which are similar in Orlicz spaces \cite{Chen}.

\begin{Theorem}[\cite{WC}, pg 133]\label{WC}
Let $\varphi$ be an Orlicz N-function. Then,
\begin{enumerate}
\item[$(1)$] If there exists $k>0$ such that $\rho_{\varphi_*,w} (p(kf)) = 1$, then $\|f\|^0 = \int_0^{\gamma} f^*p(kf^*) = \frac{1}{k}(1 + \rho_{\varphi,w} (kf))$.
\item[$(2)$] For any $f \in \Lambda_{\varphi,w}^0$, $\|f\|^0 = \inf_{k>0} \frac{1}{k} (1 + \rho_{\varphi,w}(kf))$.
\item[$(3)$] $k \in K(f)$ if and only if $\|f\|^0 = \frac{1}{k}(1 + \rho_{\varphi,w}(kf))$.
\end{enumerate}
The analogous statements occur in Orlicz-Lorentz sequence space when the modular $\rho_{\varphi,w}$ is replaced by the modular $\alpha_{\varphi,w}$.
\end{Theorem}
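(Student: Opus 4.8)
Throughout write $A(k)=\frac1k\bigl(1+\rho_{\varphi,w}(kf)\bigr)$ and $\Phi(k)=\rho_{\varphi_*,w}(p(kf))$, so that $k^*=\inf\{k:\Phi(k)\ge1\}$ and $k^{**}=\sup\{k:\Phi(k)\le1\}$. The plan is to derive all three parts from a single pair of Young-type estimates together with a short one-variable analysis of $A$. First I would prove the universal upper bound: for any $k>0$ and any admissible $g$, i.e. $\rho_{\varphi_*,w}(g)\le1$, Young's inequality $uv\le\varphi(u)+\varphi_*(v)$ applied pointwise with $u=kf^*$ and $v=g^*$ yields
\[
\int_I f^*g^*w=\frac1k\int_I (kf^*)g^*w\le\frac1k\bigl(\rho_{\varphi,w}(kf)+\rho_{\varphi_*,w}(g)\bigr)\le A(k).
\]
Taking the supremum over admissible $g$ gives $\|f\|^0\le A(k)$ for every $k$, and hence $\|f\|^0\le\inf_{k>0}A(k)$; this is the ``$\le$'' half of all three assertions.

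For the matching lower bounds I would exploit the equality case of Young's inequality, which holds whenever $v=p(u)$ or $u=q(v)$. If $\Phi(k)=1$, then $g:=p(kf)$ is admissible, and since $p$ is increasing and $f^*$ decreasing we have $g^*=p(kf^*)$. The pointwise equality $(kf^*)\,p(kf^*)=\varphi(kf^*)+\varphi_*(p(kf^*))$ integrates against $w$ to
\[
\int_I f^*p(kf^*)w=\frac1k\bigl(\rho_{\varphi,w}(kf)+\Phi(k)\bigr)=\frac1k\bigl(1+\rho_{\varphi,w}(kf)\bigr)=A(k),
\]
so $\|f\|^0\ge A(k)$; combined with the upper bound this proves part~(1).

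Part~(2) requires an optimal $k$ to exist. As $\Phi$ is non-decreasing, either $\Phi(k)=1$ for some $k$, in which case part~(1) gives $\|f\|^0=A(k)=\inf A$, or $\Phi$ jumps across the value $1$ at a single point $k_0=k^*=k^{**}$, where no $g=p(k_0f)$ has modular exactly $1$. This jump case is the main obstacle. I would resolve it by observing that at a jump point $u_0=k_0f^*(t)$ of $p$, every $v\in[p(u_0-),p(u_0)]$ satisfies $u_0=q(v)$, so Young's equality persists for such $v$. Setting $g^*$ equal to $p(k_0f^*)$ off the jump set and to a suitable intermediate value on it---tuned so that $\rho_{\varphi_*,w}(g)=1$ by continuity of the integral, or by adjusting a single coordinate in the sequence case---keeps Young's equality pointwise and gives $\int_I f^*g^*w=A(k_0)$. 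Hence $\|f\|^0=A(k_0)=\inf_{k>0}A(k)$, proving part~(2).

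Finally, for part~(3) I would locate the minimizers of $A$. Differentiating under the integral sign, which is legitimate because the convex difference quotients of $\varphi$ are monotone, gives $\frac{d}{dk}\rho_{\varphi,w}(kf)=\int_I f^*p(kf^*)w$; combining this with the Young identity $\int_I(kf^*)p(kf^*)w=\rho_{\varphi,w}(kf)+\Phi(k)$ one computes
\[
A'(k)=\frac{1}{k^2}\bigl(\Phi(k)-1\bigr).
\]
Since $\Phi$ is non-decreasing, $A$ is non-increasing on $(0,k^{**}]$ and non-decreasing on $[k^*,\infty)$, and strictly so off $[k^*,k^{**}]$; therefore $A$ attains its minimum exactly on $[k^*,k^{**}]=K(f)$. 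Because $\|f\|^0=\min_kA(k)$ by part~(2), it follows that $A(k)=\|f\|^0$ if and only if $k\in K(f)$, which is part~(3). The sequence-space statements follow verbatim, with the integrals against $w$ replaced by sums against $(w(i))$ and the single-coordinate adjustment used in the jump case of~(2).
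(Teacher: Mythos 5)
First, for context: the paper does not prove this theorem at all --- it is quoted from Wang--Chen (reference \cite{WC}, p.~133) as a known result, so there is no internal proof to compare yours against. On its own terms, your argument is the right classical template: the Young-inequality upper bound $\|f\|^0\le A(k)$ for all $k$, the equality case of Young's inequality for part (1), and the one-variable analysis via $A'(k)=k^{-2}(\Phi(k)-1)$ for part (3) are all sound (and you correctly restore the weight $w$ in $\int_0^\gamma f^*p(kf^*)w$, which is missing from the paper's statement of (1), evidently a typo).

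There is, however, a genuine gap in part (2), which part (3) inherits. Your dichotomy --- either $\Phi(k)=1$ for some $k$, or $\Phi$ jumps across $1$ at $k_0=k^*=k^{**}$ because $k_0f^*$ hits jump points of $p$ --- is incomplete. The interpolation-at-jumps-of-$p$ device only works in the sub-case $\Phi(k_0-)\le 1\le\Phi(k_0)$. A third case is possible: $\Phi(k_0)<1$ while $\Phi(k)=\infty$ for every $k>k_0$. Indeed, since $\Phi$ is right-continuous at $k_0$ whenever it is finite somewhere to the right of $k_0$ (dominated convergence, using right-continuity of $p$), the assumption $\Phi(k_0)<1$ together with $k_0=k^{**}$ \emph{forces} $\Phi\equiv\infty$ on $(k_0,\infty)$; and this situation really occurs for exponential-type $N$-functions and suitable unbounded $f$ --- it is precisely the case in which the supremum defining $\|f\|^0$ is not attained. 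Here there is no jump of $p$ to exploit: $g=p(k_0f)$ and every Young-equality competitor at level $k_0$ has modular strictly below $1$, so your construction cannot reach $A(k_0)$, and the equality $\|f\|^0=A(k_0)$ must instead be obtained by a limiting argument --- e.g.\ for $k\downarrow k_0$ take $g=p(kf^*)\chi_E+p(k_0f^*)\chi_{E^c}$ with $E$ chosen, using $\int_I\varphi_*(p(kf^*))w=\infty$, so that the total modular budget is exactly $1$; Young's equality on each piece then gives $\int_I f^*gw\ge\frac1k\bigl(1+\rho_{\varphi,w}(k_0f)\bigr)\to A(k_0)$. A smaller issue of the same flavour: in your jump case, if $\Phi(k_0)=\infty$ the ``continuity of the integral'' you invoke to tune the modular to $1$ fails for a single global interpolation parameter, and one must fill in the jump set from one end instead; that is repairable, but it should be addressed rather than asserted.
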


 This article has three parts. In section 2, we compute the norm of a singular linear functional $S$ on Orlicz-Lorentz spaces.  We show that $\|S\|$ is the same for both the Luxemburg norm and the Orlicz norm. In section 3, we compute the norm of a bounded linear functional on $\Lambda_{\varphi,w}$ and $\Lambda_{\varphi,w}^0$. The formulas differ dependently on the norm of the space. Furthermore, we show that $(\Lambda_{\varphi,w})_a$ is an $M$-ideal of $\Lambda_{\varphi,w}$, but $(\Lambda_{\varphi,w}^0)_a$ is not an $M$-ideal of $\Lambda_{\varphi,w}^0$ when $\varphi$ is an Orlicz $N$-function and does not satisfy the appropriate $\Delta_2$ condition. The analogous results for the sequence spaces are also given.

\section{Singular linear functionals on Orlicz-Lorentz spaces}

In this section, we show that the formula for $\|S\|$ is the same regardless of Luxemburg or Orlicz norm on Orlicz-Lorentz function or sequence spaces. Letting $f \in L^0$, define $\theta = \theta(f) = \inf\{\lambda > 0 : \rho_{\varphi,w}(f/\lambda) < \infty\}$. It is clear that $\theta(f) < \infty$ for any $f\in \Lambda_{\varphi,w}$. If $f\in (\Lambda_{\varphi,w})_a$, then $\rho_{\varphi,w}\left(\frac{f}{\lambda}\right) < \infty$ for all $\lambda > 0$, so we see that $\theta(f) = 0$. Clearly, $\theta(f) \le \|f\|$. The analogous definitions and facts also hold for Orlicz-Lorentz sequence spaces.

Even though the next two results and their proofs in Orlicz-Lorentz spaces are similar to those in Orlicz spaces \cite{Chen}, we state and prove them in detail because they require slightly different techniques, mostly dealing with decreasing rearrangements.

\begin{Theorem} \label{thm5}
For any $f \in \Lambda_{\varphi,w}$, $\lim_n \|f - f_n\| = \lim_n\|f - f_n\|^0 = \theta(f)$, for $f_n = f \chi_{\{\frac{1}{n} \leq |f| \leq n\}}$.  For any $x = (x(i)) \in \lambda_{\varphi,w}$, $\lim_n\|x-x_n\| = \lim_n \|x - x_n\|^0 = \theta(x)$ for $x_n = x \chi_{\{1,2,...,n\}}$.
\end{Theorem}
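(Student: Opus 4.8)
The plan is to analyze the ``tail'' $g_n := f - f_n = f\chi_{A_n}$, where $A_n = \{|f| < 1/n\}\cup\{|f| > n\}$, and to show directly that both limits equal $\theta(f)$. Since $A_{n+1}\subset A_n$, the functions $|g_n| = |f|\chi_{A_n}$ decrease pointwise; moreover $\bigcap_n A_n = \{f = 0\}$, so $|g_n|\downarrow 0$ a.e. Consequently both $\|g_n\|$ and $\|g_n\|^0$ are nonincreasing, hence convergent, and it remains only to identify the limits. I emphasize at the outset that the elementary squeeze $\|g_n\|\le\|g_n\|^0\le 2\|g_n\|$ would by itself pin the Orlicz limit only to the interval $[\theta(f),2\theta(f)]$, so a sharper estimate is needed on that side.

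The central analytic step is to pass the pointwise decay through the modular. From $|g_n|\downarrow 0$ one deduces $d_{g_n}(s)\downarrow 0$ for each $s>0$ by continuity from above of the measure (legitimate since $d_{g_1}(s)\le d_f(s)<\infty$), and hence $g_n^*(t)\downarrow 0$ for each $t>0$. Fix $\lambda>\theta(f)$, so $\rho_{\varphi,w}(f/\lambda)<\infty$. Since $g_n^*\le f^*$, the integrands $\varphi(g_n^*/\lambda)$ are dominated by the integrable $\varphi(f^*/\lambda)$ and tend to $0$ pointwise, so dominated convergence gives $\rho_{\varphi,w}(g_n/\lambda)\to 0$. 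For the Luxemburg norm this yields $\rho_{\varphi,w}(g_n/\lambda)\le 1$ eventually, hence $\lim_n\|g_n\|\le\lambda$; for the Orlicz norm the Amemiya-type inequality $\|g_n\|^0\le \lambda\,(1+\rho_{\varphi,w}(g_n/\lambda))$ (the easy half of Theorem \ref{WC}(2), which follows from Young's inequality for any Orlicz $\varphi$) gives $\limsup_n\|g_n\|^0\le\lambda$. Letting $\lambda\downarrow\theta(f)$ produces $\lim_n\|g_n\|\le\theta(f)$ and $\lim_n\|g_n\|^0\le\theta(f)$.

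For the lower bound I would exploit orthogonal subadditivity in the direction that is favourable here. Write $f = f_n + g_n$ with $|f_n|\wedge|g_n|=0$; since $f_n$ is bounded with support of finite measure, $f_n\in(\Lambda_{\varphi,w})_a$, so $\rho_{\varphi,w}(f_n/\lambda)<\infty$ for every $\lambda>0$. If $\lim_n\|g_n\|<\theta(f)$, choose $\lambda$ strictly between these two numbers; then $\rho_{\varphi,w}(g_n/\lambda)\le 1$ for all large $n$, and orthogonal subadditivity gives $\rho_{\varphi,w}(f/\lambda)\le\rho_{\varphi,w}(f_n/\lambda)+\rho_{\varphi,w}(g_n/\lambda)<\infty$, contradicting $\lambda<\theta(f)$. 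Hence $\lim_n\|g_n\|\ge\theta(f)$, and with the upper bound $\lim_n\|g_n\|=\theta(f)$. The Orlicz lower bound is then free, since $\|g_n\|^0\ge\|g_n\|$ forces $\lim_n\|g_n\|^0\ge\theta(f)$; combined with the upper bound this gives $\lim_n\|g_n\|^0=\theta(f)$ as well.

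The sequence statement is entirely analogous, with $x-x_n = x\chi_{\{n+1,n+2,\dots\}}$ playing the role of $g_n$: the tail decreases coordinatewise to $0$, so $(x-x_n)^*\downarrow 0$, dominated convergence applies to the series defining $\alpha_{\varphi,w}$, and $x_n$ (finitely supported and bounded) lies in $(\lambda_{\varphi,w})_a$, so the orthogonal-subadditivity argument carries over verbatim. The main obstacle I anticipate is precisely the need to work with decreasing rearrangements rather than with the functions themselves: one must first establish $g_n^*\downarrow 0$ to license dominated convergence, and one must invoke orthogonal subadditivity (the modular is not additive across disjoint supports, since it rearranges the sum) to obtain the sharp lower bound. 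This is also exactly what forces the separate Amemiya estimate on the Orlicz side, because the crude factor-$2$ comparison cannot on its own deliver the exact value $\theta(f)$.
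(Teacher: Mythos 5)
Your proposal is correct and follows essentially the same route as the paper: monotone decay of the tails $(f-f_n)^*\downarrow 0$ plus dominated convergence through the modular and the Orlicz-norm estimate $\|g\|^0\le\lambda(1+\rho_{\varphi,w}(g/\lambda))$ for the upper bound, and orthogonal subadditivity together with $\rho_{\varphi,w}(f_n/\lambda)<\infty$ for the lower bound. The only (harmless) difference is that you treat the order-continuous case $\theta(f)=0$ uniformly within the same squeeze, whereas the paper handles it separately via convergence in $(\Lambda_{\varphi,w})_b$.
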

\begin{proof}
Let first $f \in (\Lambda_{\varphi,w})_a$. Then, clearly $\theta(f) = 0$. Moreover, in view of $d_f(\lambda) < \infty$ for all $\lambda > 0$,  the functions $f_n = f \chi_{\{\frac{1}{n} \leq |f| \leq n\}}$ are bounded  with supports of finite measure, and $f_n \rightarrow f$ a.e. and $|f_n| \leq |f|$. Since $(\Lambda_{\varphi,w})_a = (\Lambda_{\varphi,w})_b$, from Proposition 1.3.6 in \cite{BS}, we have that $\|f - f_n\| \rightarrow 0$. Moreover, by the equivalence of $\|\cdot \|$ and $\|\cdot\|^0$,  we also get $\|f - f_n\|^0 \rightarrow 0$.

Now, consider $f \in \Lambda_{\varphi,w} \setminus (\Lambda_{\varphi,w})_a$ and $f_n$ as above. In this case, we have $\theta(f) > 0$. Since $d_f(\lambda) < \infty$ for all $\lambda > 0$, and $|f-f_n| \downarrow 0$ a.e., we have $(f- f_n)^* \rightarrow 0$ (\cite{KPS}, pg 68).
Hence $\|f-f_n\|$ and  $\|f-f_n\|^0$ are monotonically decreasing, and so the limits for both $\|f-f_n\|$ and $\|f-f_n\|^0$ exist.

Letting $\epsilon \in (0, \theta)$ we have $\rho_{\varphi,w} \left(\frac{f}{\theta - \epsilon}\right) = \infty$. By the orthogonal subadditivity of $\rho_{\varphi,w}$, we have $\infty = \rho_{\varphi,w} \left(\frac{f}{\theta-\epsilon}\right) \leq \rho_{\varphi,w}\left(\frac{f_n}{\theta - \epsilon}\right) + \rho_{\varphi,w} \left(\frac{f - f_n}{\theta - \epsilon}\right)$. Clearly, the functions $f_n$ are bounded  with supports of finite measure. This implies that $\rho_{\varphi,w}\left(\frac{f_n}{\theta - \epsilon}\right) < \infty$. Hence, we have $\|f - f_n\| \geq \theta - \epsilon$ from the fact that $\rho_{\varphi,w} \left(\frac{f - f_n}{\theta - \epsilon}\right)=\infty$.

On the other hand for $\epsilon > 0$, we have $\rho_{\varphi,w} \left(\frac{f}{\theta + \epsilon}\right) < \infty$ by the definition of $\theta(f)$. Consequently, since $(f - f_n)^* \rightarrow 0$, we get $\lim_{n \rightarrow \infty} \rho_{\varphi,w} \left(\frac{f-f_n}{\theta + \epsilon}\right) = 0$ by the Lebesgue dominated convergence theorem.
Hence, in view of Theorem \ref{WC}.(2), we see that
\[
\|f - f_n\|^0 \leq (\theta + \epsilon) \left(1+ \rho_{\varphi,w} \left(\frac{f - f_n}{\theta + \epsilon}\right)\right) \rightarrow (\theta + \epsilon),
\]
as $n \rightarrow \infty$. Since $\|f\| \leq \|f\|^0$, we finally get
\[
\theta - \epsilon \leq \|f - f_n\| \leq \|f - f_n\|^0 \leq \theta + \epsilon
\]
for sufficiently large $n$ and arbitrary $\epsilon > 0$, and  the proof is complete in the function case. The proof in the sequence case is similar, so we skip it.
\end{proof}

 Now, we compute the norm of a singular functional $S$ on Orlicz-Lorentz function spaces.

\begin{Theorem}\label{theta}
For any singular functional $S$ of $\Lambda_{\varphi,w}$ equipped with the Luxemburg norm or the Orlicz norm, $\|S\| = \|S\|_{(\Lambda_{\varphi,w})^*} = \|S\|_{(\Lambda_{\varphi,w}^0)^*} = \sup\{S(f) : \rho_{\varphi,w}(f) < \infty\} = \sup\{\frac{S(f)}{\theta(f)} : f \in \Lambda_{\varphi,w} \setminus (\Lambda_{\varphi,w})_a\}$.

The analogous formulas hold for Orlicz-Lorentz sequence spaces.

\end{Theorem}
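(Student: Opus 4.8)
The plan is to prove the four quantities equal by a cycle of inequalities, with Theorem \ref{thm5} as the engine that converts norm estimates into statements about $\theta$. I will write $a = \|S\|_{(\Lambda_{\varphi,w})^*}$, $b = \|S\|_{(\Lambda_{\varphi,w}^0)^*}$, $c = \sup\{S(f) : \rho_{\varphi,w}(f) < \infty\}$, and $d = \sup\{S(f)/\theta(f) : f \in \Lambda_{\varphi,w} \setminus (\Lambda_{\varphi,w})_a\}$. Since unit balls are symmetric I may replace the operator norms of $S$ by suprema of $S(f)$ over the respective unit balls, and then $\|f\| \le \|f\|^0$ gives $\{\|f\|^0 \le 1\} \subseteq \{\|f\| \le 1\}$, so at once $b \le a$. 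I will dispose of the trivial case $S = 0$ (all quantities zero), and otherwise note that choosing $g$ with $S(g) > 0$, which forces $g \notin (\Lambda_{\varphi,w})_a$ since $S$ is singular, makes $d > 0$.

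The first real step is the observation that for $f \in \Lambda_{\varphi,w} \setminus (\Lambda_{\varphi,w})_a$ the truncations $f_n = f\chi_{\{1/n \le |f| \le n\}}$ are bounded with supports of finite measure, hence belong to $(\Lambda_{\varphi,w})_a = (\Lambda_{\varphi,w})_b$; as $S$ vanishes there, $S(f) = S(f - f_n)$ for every $n$. I would then estimate $|S(f)| = |S(f - f_n)| \le a\,\|f - f_n\|$, let $n \to \infty$, and invoke $\|f - f_n\| \to \theta(f)$ from Theorem \ref{thm5} to conclude $S(f) \le a\,\theta(f)$; running the identical argument with the Orlicz norm and $\|f - f_n\|^0 \to \theta(f)$ gives $S(f) \le b\,\theta(f)$. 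Dividing by $\theta(f) > 0$ and taking suprema yields $d \le b$ and $d \le a$. For the reverse direction I would take any $f$ with $\|f\| \le 1$, assume $S(f) > 0$ (so $f \notin (\Lambda_{\varphi,w})_a$ and $0 < \theta(f) \le \|f\| \le 1$ using $\theta \le \|\cdot\|$), and write $S(f) = \theta(f)\cdot \tfrac{S(f)}{\theta(f)} \le \theta(f)\,d \le d$, whence $a \le d$. This closes the cycle $d \le b \le a \le d$, giving $a = b = d$.

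Finally I would splice $c$ in. On one side, $\rho_{\varphi,w}(f) < \infty$ forces $\theta(f) \le 1$ by definition of $\theta$, so the same one-line estimate as above (now with $\theta(f) \le 1$ used directly) gives $S(f) \le d$ whenever $S(f) > 0$, hence $c \le d$. On the other side, for fixed $f \in \Lambda_{\varphi,w} \setminus (\Lambda_{\varphi,w})_a$ and $\epsilon > 0$ the function $f/(\theta(f) + \epsilon)$ has finite modular because $\theta(f) + \epsilon > \theta(f)$, so it is admissible for $c$ and $S(f)/(\theta(f) + \epsilon) \le c$; letting $\epsilon \downarrow 0$ and taking the supremum gives $d \le c$. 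Hence $c = d$ and all four quantities agree.

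I expect the substance to be carried entirely by Theorem \ref{thm5}: the fact that the limits of both $\|f - f_n\|$ and $\|f - f_n\|^0$ equal the single number $\theta(f)$ is exactly what makes $\|S\|$ independent of the choice of norm. The only genuinely delicate point is that $\rho_{\varphi,w}(f/\theta(f))$ may be infinite, so the last inequality $d \le c$ must be obtained through the $\theta(f) + \epsilon$ scaling and a limit rather than by evaluating at $\theta(f)$ itself. The sequence-space assertion follows by the same argument after replacing $f_n$ by $x_n = x\chi_{\{1,\dots,n\}}$, $\rho_{\varphi,w}$ by $\alpha_{\varphi,w}$, and using the sequence version of Theorem \ref{thm5}.
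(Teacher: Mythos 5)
Your proposal is correct and follows essentially the same route as the paper: both hinge on $S(f)=S(f-f_n)$ for the truncations $f_n$, Theorem \ref{thm5} to pass from $\|f-f_n\|$ and $\|f-f_n\|^0$ to $\theta(f)$, and the elementary relations $\theta(f)\le\|f\|$ and $\theta(f)\le 1$ when $\rho_{\varphi,w}(f)<\infty$. The only difference is bookkeeping — the paper closes one cycle of inequalities through all four quantities, while you prove $a=b=d$ first and then splice in $c$ (your direct argument for $d\le c$ via the $\theta(f)+\epsilon$ scaling is sound but already implied by the cycle).
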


\begin{proof}
Here we also provide the proof only in the function spaces. For a function $f \in \Lambda_{\varphi,w} \setminus (\Lambda_{\varphi,w})_a$, take again $f_n = f \chi_{\{\frac{1}{n} \leq |f| \leq n\}}$. From the fact that $f_n \in (\Lambda_{\varphi,w}^0)_a$ we have $S(f) = S(f- f_n)$ and  $S(f)  \leq \|S\|_{(\Lambda_{\varphi,w}^0)^*} \|f - f_n\|^0$. By Theorem \ref{thm5}, $\|f- f_n\|^0 \rightarrow \theta(f)$,  and so we obtain $\frac{S(f)}{\theta(f)} \leq \|S\|_{(\Lambda_{\varphi,w}^0)^*}$.

If $\rho_{\varphi,w}(f) < \infty$ then $\rho_{\varphi,w}(f - f_n) \rightarrow 0$. Thus for sufficiently large $n$, $\rho_{\varphi,w} (f - f_n) \leq 1$, and  so  $\|f - f_n\| \leq 1$. Hence by Theorem \ref{thm5}, $\theta(f) = \lim_{n \rightarrow \infty} \|f-f_n\| \leq 1$.  Since $S(f) = 0$ for all $f \in (\Lambda_{\varphi,w})_a$, we have $\sup\left\{ S(f) : \rho_{\varphi,w}(f) < \infty\right\} = \sup\left\{ S(f): f \in \Lambda_{\varphi,w} \setminus (\Lambda_{\varphi,w})_a, \  \rho_{\varphi,w}(f) < \infty\right\}$.
Notice that  $S(f) \leq \frac{S(f)}{\theta(f)}$ since $\theta(f) \le 1$. Therefore, taking into account that $\|S\|_{(\Lambda_{\varphi,w}^0)^*} \leq \|S\|_{(\Lambda_{\varphi,w})^*}$ in view of the inequality $\|\cdot\| \le \|\cdot\|^0$ and that $\|f\| \leq 1$ if and only if $\rho_{\varphi,w}(f) \leq 1$,
 we obtain

\begin{eqnarray*}
\|S\|_{(\Lambda_{\varphi,w}^0)^*} \leq \|S\|_{(\Lambda_{\varphi,w})^*} &=& \sup\{S(f) : \rho_{\varphi,w}(f) \leq 1\}\\
&\leq& \sup\left\{ S(f) : \rho_{\varphi,w}(f) < \infty\right\}\\
&\leq& \sup\left\{ \frac{S(f)}{\theta(f)} : f \in \Lambda_{\varphi,w} \setminus (\Lambda_{\varphi,w})_a, \,\,\, \rho_{\varphi,w}(f) < \infty\right\}\\
&\leq& \sup\left\{ \frac{S(f)}{\theta(f)} : f \in \Lambda_{\varphi,w} \setminus (\Lambda_{\varphi,w})_a\right\}\\
&\leq& \|S\|_{(\Lambda_{\varphi,w}^0)^*}.
\end{eqnarray*}

\end{proof}

\section{Norm of bounded linear functionals}

We need to recall first the K\"othe associate space to an Orlicz-Lorentz space. For any non-negative integrable function $f\in L^0$ and $0\le a < b < \infty$, denote $F(a,b) = \int_a^b f$.  Let $h \in L^0$  be non-negative and locally integrable on $I$. Then   the interval $(a, b) \subset I$ is called a level interval of $h$ with respect to the weight $w$, if $R(a,t) := \frac{H(a,t)}{W(a,t)} \leq \frac{H(a,b)}{W(a,b)} = R(a, b)$ for all $a < t < b$ and $R(a,b) > 0$.  In the case where $b = \infty$, define $R(a,b) = R(a, \infty) = \limsup_{t \rightarrow \infty}R(a, t)$.  If the level interval $(a,b)$ is not contained in a larger level interval, we say that $(a,b)$ is a maximal level interval. Halperin's level function of $h$, denoted by $h^0$, is defined as
\[
h^0(t) =
\begin{cases}
      R(a_j, b_j)w(t) = \frac{H(a_j, b_j)}{W(a_j, b_j)}w(t) , & t \in (a_j, b_j) \ \ \text{for some} \ \ j, \\
      h(t), & t \notin \cup_j(a_j, b_j),
   \end{cases}
\]
\noindent provided that each $(a_j, b_j)$ is a maximal level interval. Similarly, for a non-negative sequence $h= (h(i)) \in l^0$ and a positive decreasing weight $w= (w(i))$, the interval $(a, b] = \{a+1, a+2, ... , b\}$ is called a level interval if $r(a,j) = \frac{h(a,j)}{w(a,j)} \leq \frac{h(a,b)}{w(a,b)} = r(a,b)$ for every $a+1 \leq j \leq b$ and $r(a,b) >0$, where $h(a,j) = \sum_{i=a+1}^j h(i)$ and $w(a, j) = \sum_{i=a+1}^j w(i)$. The level sequence $h^0$ is defined as
 \[
h^0(i) =
\begin{cases}
      r(a_j, b_j)w(i) , & i \in (a_j, b_j] \ \ \text{for some} \ \ j , \\
      h(i), & i \notin \cup_j(a_j, b_j],
   \end{cases}
\]
where each $(a_j, b_j]$ is a maximal level interval.
Letting $h\in L^0$ define $P_{\varphi,w}(h) = \inf\left\{\int_I \varphi\left(\frac{|h|}{v}\right)v : v \prec w\right\} $, and then
the space $\mathcal{M}_{\varphi,w}$ as the set of all $h \in L^0$ such that $P_{\varphi,w}(\lambda h) < \infty$ for some $\lambda > 0$. By Theorem 4.7 in \cite{KLR} we have $P_{\varphi,w}(h) = \int_I \varphi((h^*)^0/w) w$ if $\varphi$ is $N$-function. The Luxemberg norm and the Orlicz norm for the modular $P_{\varphi,w}$ are defined as,
\[
\|h\|_{\mathcal{M}_{\varphi,w}} = \inf\{\epsilon > 0 : P_{\varphi,w}\left({h}/{\epsilon}\right) \leq 1\}
\ \ \ \text{and} \ \ \  
\|h\|_{\mathcal{M}_{\varphi,w}^0} = \inf_{k > 0} \frac{1}{k}(1 + P_{\varphi,w}(kh)),
\]
respectively.

For $h \in \ell^0$, we define
$p_{\varphi,w}(h) = \inf\left\{\sum_{i=1}^{\infty} \varphi\left(\frac{|h(i)|}{v(i)}\right)v(i) : v \prec w \right\}$.
The space $\mathfrak{m}_{\varphi,w}$ is the set of all $h = (h(i))$ such that $p_{\varphi,w}(\eta h) < \infty$ for some $\eta > 0$. The Luxemburg norm and the Orlicz norm on $\mathfrak{m}_{\varphi,w}$  are given analogously as in function spaces where we replace $P_{\varphi,w}$ by  $p_{\varphi,w}$.

 From now on  we denote by $\mathcal{M}_{\varphi,w}$ and $\mathfrak{m}_{\varphi,w}$ the space equipped with the Luxemburg norm $\| \cdot \|_{\mathcal{M}_{\varphi,w}}$ and $\|\cdot\|_{\mathfrak{m}_{\varphi,w}}$ respectively, and $\mathcal{M}_{\varphi,w}^0$ and $\mathfrak{m}_{\varphi,w}^0$ the space equipped with the Orlicz norms $\| \cdot \|_{\mathcal{M}_{\varphi,w}^0}$ and $\|\cdot\|_{\mathfrak{m}_{\varphi,w}^0}$ respectively. All those spaces  are  rearrangement invariant Banach spaces \cite{KR}.

\begin{Theorem}[\cite{KLR}, Theorems 2.2, 5.2]\label{th:01}
Let  $w$ be a decreasing weight and $\varphi$ be an Orlicz $N$-function.  Then the K\"othe dual space to an Orlicz-Lorentz space $\Lambda _{\varphi ,w}$ (resp. $\Lambda_{\varphi,w}^0$)  is expressed as
\begin{equation*}
\left( \Lambda _{\varphi ,w}\right) ^{\prime }=\mathcal{M}_{\varphi _{\ast
},w}^{0}\ \ \ (\text{resp.}\ (\Lambda_{\varphi,w}^0)^{\prime} = \mathcal{M}_{\varphi _{\ast
},w})
\end{equation*}
with equality of norms.
Similarly in the sequence case we have
\begin{equation*}
\left(\lambda _{\varphi ,w}\right) ^{\prime }=\mathfrak{m}_{\varphi _{\ast},w}^{0}\ \ (\text{resp.} \ \ (\lambda_{\varphi,w}^0)' = \mathfrak{m}_{\varphi_*,w})
\end{equation*}
with equality of norms.

\end{Theorem}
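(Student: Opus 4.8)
The plan is to compute the associate (Köthe dual) norm directly from its definition and identify it with the appropriate norm of the $\mathcal{M}$-space. I will describe the function case $(\Lambda_{\varphi,w})' = \mathcal{M}_{\varphi_*,w}^0$ in detail; the sequence statements and the Orlicz-norm statements follow the same scheme. First I would reduce everything to a supremum over decreasing functions. Since $\Lambda_{\varphi,w}$ is rearrangement invariant and, for an $N$-function, $\|f\| \le 1$ if and only if $\rho_{\varphi,w}(f) \le 1$, the Hardy--Littlewood inequality $\int_I |fh| \le \int_0^\gamma f^* h^*$ (with equality for a suitable equimeasurable rearrangement of $f$) yields
\[
\|h\|_{(\Lambda_{\varphi,w})'} = \sup\left\{\int_0^\gamma g\,h^* : g\downarrow\ge 0,\ \int_0^\gamma \varphi(g)\,w \le 1\right\}.
\]

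The heart of the argument is the level-function reduction. Writing $H^*(t)=\int_0^t h^*$ and $(H^*)^0(t)=\int_0^t (h^*)^0$, the defining property of the maximal level intervals gives the majorization $H^*(t)\le (H^*)^0(t)$ for every $t$, with equality at the endpoints of each level interval. Integrating by parts against a non-increasing $g$ then produces $\int_0^\gamma g\,h^* \le \int_0^\gamma g\,(h^*)^0$, and equality holds whenever $g$ is constant on each maximal level interval. Because $(h^*)^0/w$ is non-increasing and is constant (equal to $R(a_j,b_j)$) on each level interval, I would replace the problem by
\[
\sup\left\{\int_0^\gamma g\,\frac{(h^*)^0}{w}\,w : g\downarrow\ge 0,\ \int_0^\gamma \varphi(g)\,w \le 1\right\},
\]
which is precisely the associate-norm computation for the element $(h^*)^0/w$ in the weighted Orlicz space $L_\varphi(w\,dt)$. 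Since the weight $(h^*)^0/w$ is itself decreasing, the extremal test function provided by the equality case of Young's inequality — namely $g$ proportional to $q((h^*)^0/w)$, where $q$ is the right derivative of $\varphi_*$ — is again decreasing and constant on level intervals, so both the restriction to monotone $g$ and the passage from $h^*$ to $(h^*)^0$ are lossless.

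It then remains to identify the value. By the classical Orlicz duality between the Luxemburg modular $\int \varphi(\cdot)\,w$ and the Orlicz norm for $\varphi_*$, the last supremum equals $\inf_{k>0}\frac{1}{k}\bigl(1+\int_0^\gamma \varphi_*(k(h^*)^0/w)\,w\bigr)$. Using the positive homogeneity of the level function, $((kh)^*)^0 = k\,(h^*)^0$, together with the quoted formula $P_{\varphi_*,w}(kh)=\int_I \varphi_*(((kh)^*)^0/w)\,w$ of Theorem~4.7 in \cite{KLR}, this is exactly $\inf_{k>0}\frac{1}{k}(1+P_{\varphi_*,w}(kh))=\|h\|_{\mathcal{M}_{\varphi_*,w}^0}$, giving $(\Lambda_{\varphi,w})'=\mathcal{M}_{\varphi_*,w}^0$ with equality of norms. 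For the Orlicz-normed space $\Lambda_{\varphi,w}^0$ I would run the same reduction, but now the unit ball is described through the definition of $\|\cdot\|^0$ (equivalently, via Theorem \ref{WC} and the $k$-parametrization); the roles of the Luxemburg and Orlicz normalizations are then reversed, so the weighted Orlicz duality returns the \emph{Luxemburg} norm for the modular $P_{\varphi_*,w}$, yielding $(\Lambda_{\varphi,w}^0)'=\mathcal{M}_{\varphi_*,w}$. The sequence cases follow verbatim with sums, the sequence level function, and $\alpha_{\varphi,w}$, $p_{\varphi,w}$ replacing the integral modulars.

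The main obstacle is the level-function step: one must prove the majorization inequality and pin down its equality case \emph{simultaneously} with the attainment of Young's equality, so that the upper bound obtained by replacing $h^*$ by $(h^*)^0$ is actually achieved by an admissible decreasing $g$. Additional care is needed in the Orlicz-norm statement, where the unit ball is not a modular sublevel set, and in the infinite-measure setting $\gamma=\infty$, where the integrability of $h^*$, the definition $R(a,\infty)=\limsup_{t\to\infty}R(a,t)$, and possible contributions near infinity must be controlled before the weighted Orlicz duality can be applied.
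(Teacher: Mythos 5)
First, a point of order: the paper does not prove this statement at all --- Theorem \ref{th:01} is imported verbatim from \cite{KLR} (Theorems 2.2 and 5.2), so there is no in-paper proof to compare against. Your outline does follow the strategy actually used in \cite{KLR}: reduce to decreasing functions by Hardy--Littlewood, pass from $h^*$ to the level function $(h^*)^0$ via the majorization $H^*(t)\le (H^*)^0(t)$ and Hardy's lemma, and then invoke weighted Orlicz duality together with $P_{\varphi_*,w}(h)=\int_I\varphi_*\bigl((h^*)^0/w\bigr)w$. As a roadmap it is sound.

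As a proof, however, it has gaps precisely at the points you flag, and two of them are more than bookkeeping. (i) When $\gamma=\infty$ a maximal level interval can be unbounded, $(a,\infty)$, where $(h^*)^0=R(a,\infty)w$ with $R(a,\infty)=\limsup_{t\to\infty}R(a,t)$ and $W(a,\infty)=\infty$; there the endpoint equality $\int_a^\infty h^*=\int_a^\infty (h^*)^0$ on which your ``lossless'' step rests simply fails (the right side can be infinite while the left is finite), so the claim that a $g$ constant on each maximal level interval achieves equality breaks down and a separate truncation/approximation argument is required. (ii) The attainment step needs care: Young's equality forces $g=q\bigl(k(h^*)^0/w\bigr)$ for some admissible $k$ with $\int_I\varphi\bigl(q(k(h^*)^0/w)\bigr)w\le 1$, and such a normalizing $k$ need not exist (the modular can jump past $1$, exactly the $k^*\ne k^{**}$ phenomenon of Theorem \ref{WC}); one must run a limiting argument rather than exhibit an extremizer. (iii) The Orlicz-norm half, $(\Lambda^0_{\varphi,w})'=\mathcal{M}_{\varphi_*,w}$, is asserted by symmetry but does not follow by ``reversing roles'': the unit ball of $\Lambda^0_{\varphi,w}$ is defined through the pairing $\int_I f^*g^*w$ against the modular ball of $\rho_{\varphi_*,w}$, not through a sublevel set of $\rho_{\varphi,w}$, so identifying its associate norm with the Luxemburg norm of $P_{\varphi_*,w}$ requires a bipolar/Fatou-property argument relating $\{g:\rho_{\varphi_*,w}(g)\le1\}$ to $\{h:P_{\varphi_*,w}(h)\le1\}$ via the level function --- this is the substantive content of Theorem 5.2 in \cite{KLR} and is missing from your sketch. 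In the context of the present paper the correct move is simply to cite \cite{KLR}, as the authors do.
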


Let $X$ be an Orlicz-Lorentz function or sequence space equipped with either norm. Then, $X^* = X_r \oplus X_s$, where $X_r$ is isomorphically isometric to its K\"othe associate space $X'$, and $X_s = (X_a)^\perp$.

\begin{Theorem}\label{th:lux}
Assume $\varphi$ is $N$-function. Let $F$ be a bounded linear functional on $\Lambda_{\varphi,w}$. Then $F= H + S$, where $H(f) = \int_I fh$ for some
$h\in \mathcal{M}^0_{\varphi_*,w}$,
$\|H\|= \|h\|^0_{\mathcal{M}_{\varphi_*,w}}$, $S(f)=0$
 for all $f\in (\Lambda_{\varphi,w})_a$, and     $\|F\|_{(\Lambda_{\varphi,w})^*} = \|h\|_{\mathcal{M}_{\varphi_*, w}}^0 + \|S\|$.
\end{Theorem}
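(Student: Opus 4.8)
The plan is to split the statement into its structural part (the decomposition $F=H+S$ together with the identification of $H$ and $\|H\|$) and its quantitative heart (the additivity $\|F\|=\|H\|+\|S\|$), and to prove the latter by an extremal construction. For the structural part I would invoke the general framework recalled in the introduction: since $(\Lambda_{\varphi,w})_a=(\Lambda_{\varphi,w})_b$ and $\Lambda_{\varphi,w}$ has the Fatou property, we get $(\Lambda_{\varphi,w})^*=\big((\Lambda_{\varphi,w})_a\big)^\perp\oplus(\Lambda_{\varphi,w})'$ with $\big((\Lambda_{\varphi,w})_a\big)^*=(\Lambda_{\varphi,w})'$ isometrically. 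This yields the unique representation $F=H+S$ with $S$ singular and $H(f)=\int_I fh$ regular, and Theorem \ref{th:01} identifies $(\Lambda_{\varphi,w})'=\mathcal M^0_{\varphi_*,w}$ isometrically, so that $h\in\mathcal M^0_{\varphi_*,w}$ and $\|H\|=\|h\|^0_{\mathcal M_{\varphi_*,w}}$. The inequality $\|F\|\le\|H\|+\|S\|$ is then just the triangle inequality, so the whole content lies in the reverse inequality.

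For $\|F\|\ge\|H\|+\|S\|$ I would fix $\epsilon>0$ and build one test function of norm $\le 1$ on which $F$ is nearly $\|H\|+\|S\|$. First, for the regular part, since $\|H\|=\sup\{H(g):g\in(\Lambda_{\varphi,w})_a,\ \|g\|\le1\}$ and $(\Lambda_{\varphi,w})_a=(\Lambda_{\varphi,w})_b$, I would choose $g$ bounded with finite-measure support $A$, $\rho_{\varphi,w}(g)\le1$, and $H(g)>\|H\|-\epsilon$; replacing $g$ by $(1-\sqrt\epsilon)g$ and using convexity of $\varphi$ (so $\rho_{\varphi,w}((1-\sqrt\epsilon)g)\le(1-\sqrt\epsilon)\rho_{\varphi,w}(g)$) I may assume $\rho_{\varphi,w}(g)\le 1-\sqrt\epsilon$ with $H(g)\ge\|H\|-O(\sqrt\epsilon)$ and $S(g)=0$. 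Next, for the singular part, Theorem \ref{theta} supplies $f$ with $\rho_{\varphi,w}(f)<\infty$ and $S(f)>\|S\|-\epsilon$; replacing $f$ by $f-f_n$ for large $n$ and using $(f-f_n)^*\le f^*$, $(f-f_n)^*\to0$ together with dominated convergence (exactly as in the proofs of Theorems \ref{thm5} and \ref{theta}), I may in addition arrange $\rho_{\varphi,w}(f)<\epsilon$ and $|H(f)|=|\int_I fh|<\epsilon$, while $S(f-f_n)=S(f)$ stays unchanged and $f\notin(\Lambda_{\varphi,w})_a$.

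Granting that the singular realizer can be taken with support off $A$, i.e. that there is $\bar f$ with $\operatorname{supp}\bar f\subset A^c$, $\rho_{\varphi,w}(\bar f)<\epsilon$, and $S(\bar f)>\|S\|-O(\epsilon)$, the two pieces are disjoint, so orthogonal subadditivity gives $\rho_{\varphi,w}(g+\bar f)\le\rho_{\varphi,w}(g)+\rho_{\varphi,w}(\bar f)\le(1-\sqrt\epsilon)+\epsilon\le1$, whence $\|g+\bar f\|\le1$ and
\[
\|F\|\ge F(g+\bar f)=H(g)+H(\bar f)+S(\bar f)\ge\|H\|+\|S\|-O(\sqrt\epsilon);
\]
letting $\epsilon\to0$ completes the function case. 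The sequence case is handled identically, with $\rho_{\varphi,w}$, $\mathcal M^0_{\varphi_*,w}$ and $f_n$ replaced by $\alpha_{\varphi,w}$, $\mathfrak m^0_{\varphi_*,w}$ and $x_n=x\chi_{\{1,\dots,n\}}$.

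The hard part will be precisely the disjointification in the previous paragraph. One cannot simply set $\bar f=f\chi_{A^c}$ and bound the loss by $|S(f\chi_A)|\le\|S\|\,\|f\chi_A\|$: because $\varphi$ fails the appropriate $\Delta_2$ condition, a small modular does \emph{not} control the Luxemburg norm (the restriction $f\chi_A$ may be a tall, thin spike of arbitrarily small modular yet norm close to $1$), so $S(f\chi_A)$ need not be negligible, and the singular mass may well sit in the high-level set $\{|f|>n\}$, whose overlap with $A$ has small measure but on which $f$ is large. Resolving this will require either choosing the finite-measure support $A$ of the regular optimizer so as to avoid this small-measure high-level set while still nearly norming $h$ in $\mathcal M^0_{\varphi_*,w}$, or a separate lemma asserting that $\|S\|$ is attained by small-modular functions supported off any prescribed set of finite measure. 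This is exactly where the decreasing-rearrangement and level-function technicalities peculiar to Orlicz–Lorentz spaces (and absent in the Orlicz case) enter, and it is the step I expect to be the genuine obstacle.
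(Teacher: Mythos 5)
Your structural reduction and the overall strategy (near-optimizers for $H$ and $S$ with essentially disjoint supports, glued via orthogonal subadditivity of $\rho_{\varphi,w}$) coincide with the paper's, but the proof is incomplete: the disjointification step that you explicitly flag as ``the genuine obstacle'' and then assume as a hypothesis (``granting that the singular realizer can be taken with support off $A$\dots'') is exactly the content of the lower bound, so as written there is a gap, and your diagnosis of why it is hard points at the wrong quantity. The paper never attempts to bound the singular functional on a removed overlap by a norm estimate of the form $|S(f\chi_A)|\le\|S\|\,\|f\chi_A\|$ (which, as you correctly note, cannot be controlled by the modular without $\Delta_2$). It avoids this entirely by two observations.

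First, let $g$ be the near-optimizer for $S$ with $\rho_{\varphi,w}(g)\le 1$, put $E=\{|g|>n\}$ and $G_2=E\cup[n,\gamma)$. Then $g\chi_{G_2^c}$ is bounded by $n$ on a subset of $[0,n)$, hence lies in $(\Lambda_{\varphi,w})_a$, so $S(g)=S(g\chi_{G_2})$ \emph{exactly}: no singular mass is lost and no norm estimate on the discarded piece is ever needed. Dominated convergence applied to $g^*$ makes $\int_0^{mE}\varphi(g^*)w$ and $\int_I\varphi((g\chi_{[n,\gamma)})^*)w$ both less than $\epsilon/2$, so the retained piece $g\chi_{G_2}$ has modular at most $\epsilon$. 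Second, the regular near-optimizer $f$ (bounded, with finite-measure support arranged to lie in $[0,n)$) is trimmed to $f\chi_{\supp f\setminus E}$, and the loss is measured by $\int_E|hf|$, which is small by absolute continuity of the integral of the fixed integrable function $hf$ once $mE<\delta$ --- again no Luxemburg-norm control of $f\chi_E$ is required. The test function $u=f\chi_{\supp f\setminus E}+g\chi_{G_2}$ then satisfies $\rho_{\varphi,w}(u)\le 1+\epsilon$ and $F(u)\ge\|h\|^0_{\mathcal{M}_{\varphi_*,w}}+\|S\|-5\epsilon$. So the auxiliary lemma you wish for (that $\|S\|$ is realized by small-modular functions living off any prescribed finite-measure set) is true, but for a reason your proposal does not supply: a singular functional annihilates every bounded function supported on a set of finite measure, so one restricts $g$ to its high-level set union a far tail rather than to the complement of $\supp f$, and the interaction with the regular optimizer is paid for in $\int_E|hf|$, not in $\|f\chi_E\|$ or $\|g\chi_A\|$. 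Without this, your argument does not close.
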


\begin{proof}

By Theorem \ref{th:01} and the remark above, $F= H+S$ uniquely, where $H(f) = \int_I hf$ for some $h\in \mathcal{M}^0_{\varphi_*,w}$ with
$\|H\|= \|h\|^0_{\mathcal{M}_{\varphi_*,w}}$, and $S(f)=0$
 for all $f\in (\Lambda_{\varphi,w})_a$. Observe  by Theorem \ref{theta} that the norm of the singular functional  $\|S\|$ is the same under either the Luxemburg norm or the Orlicz norm.

Clearly  $\|F\|_{(\Lambda_{\varphi,w})^*} = \|H+S\|_{(\Lambda_{\varphi,w})^*} \leq \|H\|_{(\Lambda_{\varphi,w})^*} + \|S\| = \|h\|_{\mathcal{M}_{\varphi_*, w}}^0 + \|S\|$. Now we show the opposite inequality.

Let $\epsilon >0$ be arbitrary. From the definitions of $\|h\|_{\mathcal{M}_{\varphi_*,w}}^0$ and $\|S\|$, we can choose $f, g \in \Lambda_{\varphi,w}$ with $\|f\| \leq 1, \|g\| \leq 1$ such that

\begin{equation} \label{DN}
\|h\|_{\mathcal{M}_{\varphi_*,w}}^0 - \epsilon < \int_I hf \,\,\, \text{and} \,\,\, \|S\| - \epsilon < S(g).
\end{equation}

We can assume that $f$ is bounded. Indeed, let $z \in S_{\Lambda_{\varphi,w}}$ be such that $\|h\|_{\mathcal{M}_{\varphi_*,w}}^0- \frac{\epsilon}{2} < \int_I |hz|$. Let $(z_n)_{n=1}^{\infty}$ be a sequence of non-negative bounded functions with supports of finite measure defined on $[0,n)$ such that $z_n \uparrow |z|$ a.e.  Then, $\int_I |h| |z| = \lim_{n \rightarrow \infty} \int_I |h| z_n$ by the monotone convergence theorem, which implies that for all $\epsilon > 0$, there exists $z_{n_0}$ such that $\int_I |hz| - \frac{\epsilon}{2} \leq \int_I |h| z_{n_0}$. Hence,

\[
\|h\|_{\mathcal{M}_{\varphi_*, w}}^0 - \frac{\epsilon}{2} - \frac{\epsilon}{2} \leq \int_I |hz| - \frac{\epsilon}{2} \leq \int_I |h|z_{n_0}.
\]

\noindent Let $f = (\sign{h})z_{n_0}$. Thus, we found a bounded function $f$ of support of finite measure such that $\|f\| \leq 1$ and $\|h\|_{\mathcal{M}_{\varphi_*, w}}^0 - \epsilon < \int_I hf$.\

Since $H$ is a bounded linear functional on $\Lambda_{\varphi,w}$, $hf$ is integrable, so there exists $\delta > 0$ such that for every measurable subset $E \subset I$, with $mE < \delta$, we have

\begin{equation}\label{Ex}
\int_{E} |hf| < \epsilon.
\end{equation}

Now, we show that there exist $n \in \mathbb{N}$ and a measurable subset $E \subset I$ such that $mE < \delta$ and  

\begin{equation}\label{Ey}
\int_E |hg| < \epsilon,\ \ \int_0^{mE} \varphi(g^*) w < \frac{\epsilon}{2},\ \ \int_I \varphi((g\chi_{[n, \gamma)})^*)w < \frac{\epsilon}{2},\ \ \text{and} \ \   \int_n^{\gamma} |hg|< \epsilon.
\end{equation}

\noindent Indeed, let $E_n = \{g^* > n\} = [0, t_n)$ and define $g_n^* = g^* \chi_{[0, t_n)}$. We see that $g_n^* \leq g^*$ and $ g_n^* \downarrow 0$ a.e., so by the Lebesgue dominated convergence theorem, $\lim_{n \rightarrow \infty} \int_I \varphi(g_n^*) w = 0$. This implies that for any $\epsilon >0$, there exists $N_1$ such that for every $n \geq N_1$,

\begin{equation} \label{Eyn}
\int_I \varphi(g_n^*)w = \int_I \varphi(g^*\chi_{[0, t_n)})w = \int_0^{t_n} \varphi(g^*)w = \int_0^{mE_n} \varphi(g^*)w<\frac{\epsilon}{2}.
\end{equation}

\noindent Also, $E_{n+1} \subset E_n$ for all $n \in \mathbb{N}$ and $m(\cap E_n) = m\{g^* =\infty\} =  0$. By continuity of measure, $0= m(\cap E_n) = \lim_{n \rightarrow \infty} m\{g^*>n\}$.\

Since $g \sim g^*$, we see that $\lim_{n \rightarrow \infty} m\{|g| > n\} = 0$. The function $hg$ is integrable, so we have $\lim_{n \rightarrow \infty} \int_{\{|g| > n\}} |hg| = 0$. Then, there exists $N_2$ such that $\int_{\{|g| > n\}} |hg| < \epsilon$ for $n \geq N_2$. Since $\rho_{\varphi,w}(g) < \infty$, we choose sufficiently large $n \geq N = \max\{N_1, N_2\}$ satisfying $mE_n = m \{|g|>n\} < \delta$, $\supp{f} \cap [n, \gamma) = \emptyset$, $\int_I \varphi((g\chi_{[n, \gamma)})^*)w < \frac{\epsilon}{2}$, and $\int_{[n,\gamma)} |hg|< \epsilon$. By letting $E = \{|g|>n\}$ for such $n$, we found $n \in \mathbb{N}$ and a measureable subset $E \subset I$ satisfying (\ref{Ey}). Note that $\supp{f} \subset [0, n)$ from the construction.

Define

\[u(t) =
\begin{cases}
f(t), & t \in G_1 = \supp{f} \setminus E\\
g(t), & t \in G_2 = E \cup [n, \gamma)\\
0, &  \text{otherwise}.
\end{cases}
\]

\noindent By the orthogonal subadditivity of the modular $\rho_{\varphi, w}$, we have

\begin{eqnarray*}
\rho_{\varphi,w}(u) = \int_I \varphi(f \chi_{G_1} + g \chi_{G_2})^*w &\leq& \int_I \varphi((f\chi_{G_1})^*)w +   \int_I \varphi((g\chi_ {G_2})^*)w\\
&\leq& \int_0^{mG_1} \varphi(f^*)w + \int_I \varphi(g\chi_E + g\chi_{[n, \gamma) \setminus E})^* w \\
&\leq&  \int_0^{mG_1} \varphi(f^*)w +  \int_I \varphi(g\chi_E)^* w +  \int_I \varphi(g\chi_{[n, \gamma)\setminus E})^* w\\
&\leq&  \int_0^{mG_1} \varphi(f^*)w +  \int_0^{mE} \varphi(g^*) w + \int_I \varphi(g\chi_{[n, \gamma)})^* w\\
&\leq& 1 + \epsilon,
 \end{eqnarray*}

\noindent which implies that $\rho_{\varphi,w}(\frac{u}{1+\epsilon}) \leq 1$, and so $\|\frac{u}{1+\epsilon}\| \leq 1$. We see that $S(f\chi_{G_1}) = 0$ from $f\in (\Lambda_{\varphi,w})_a$. Also, $g \chi_{G_1} \in (\Lambda_{\varphi,w})_a$ because $mG_1 = m(\supp{f} \setminus E) \leq m([0,n) \setminus E) < \infty$ and $g$ is bounded on $G_1$. This implies that $S(g \chi_{G_1}) = 0$. Hence,  $S(g) = S(g \chi_{G_1}) + S(g \chi_{G_2}) = S(g \chi_{G_2})$. Moreover, from (\ref{Ey}), we have $\left|\int_{E \setminus [n, \gamma)} hg\right| \leq \int_{E \setminus [n, \gamma)} |hg| \leq \int_E |hg| < \epsilon$.

It follows that

\begin{eqnarray*}
(1+ \epsilon)\|F\| \geq (1+ \epsilon) F\left(\frac{u}{1+\epsilon}\right) = F(u) &=& F(f \chi_{G_1} + g \chi_{G_2})\\
&=& \int_I h(f \chi_{G_1}+ g\chi_{G_2}) + S((f \chi_{G_1} + g\chi_{G_2})) \\
&=& \int_I hf \chi_{G_1} + \int_I hg\chi_ {G_2} +S(f \chi_{G_1}) + S(g\chi_{G_2})  \\
&=& \int_{\supp{f} \setminus E} hf +\int_{E \cup [n, \gamma)} hg + S(g\chi_{G_2})\\
&=& \int_I hf -\int_E hf +\int_{E \setminus [n, \gamma)} hg + \int_{[n, \gamma)} hg + S(g)\\
&>& \|h\|_{\mathcal{M}_{\varphi_*,w}}^0 - 2\epsilon - 2\epsilon + S(g)\,\,\, (\text{by (\ref{Ex}) and (\ref{Ey})})\\
&>& \|h\|_{\mathcal{M}_{\varphi_*, w}}^0 -2\epsilon -2\epsilon + \|S\| - \epsilon \,\,\, (\text{by (\ref{DN})})\\
&=& \|h\|_{\mathcal{M}_{\varphi_*, w}}^0 + \|S\| - 5\epsilon.
\end{eqnarray*}

\noindent As $\epsilon \rightarrow 0$, the proof is done.
\end{proof}

The sequence version below has analogous  (simpler) proof so we skip it.

\begin{Theorem}\label{th:luxseq} Suppose $\varphi$ is $N$-function and let $F$ be a bounded linear functional on $\lambda_{\varphi,w}$. Then $F= H+S$, where $H(x) = \sum_{i=1}^{\infty} x(i) y(i)$, $\|H\| = \| y\|^0_{\mathfrak{m}_{\varphi_*,w}}$, $S$ is a singular functional vanishing on $(\lambda_{\varphi,w})_a$ and $\|F\|_{(\lambda_{\varphi,w})^*} = \|y\|^0_{\mathfrak{m}_{\varphi_*,w}} + \|S\|$.
\end{Theorem}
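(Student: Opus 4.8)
The plan is to mirror the proof of Theorem \ref{th:lux}, while exploiting the fact that elements of $\lambda_{\varphi,w}$ are automatically bounded, which eliminates the auxiliary small-measure set $E$ used in the function case. First, the sequence version of Theorem \ref{th:01} together with the splitting $X^* = X_r \oplus X_s$ recorded before Theorem \ref{th:lux} yields the unique decomposition $F = H + S$, where $H(x) = \sum_{i=1}^{\infty} x(i)y(i)$ for some $y \in \mathfrak{m}^0_{\varphi_*,w}$ with $\|H\| = \|y\|^0_{\mathfrak{m}_{\varphi_*,w}}$, and $S$ vanishes on $(\lambda_{\varphi,w})_a$; by Theorem \ref{theta} the value $\|S\|$ does not depend on which of the two norms is placed on $\lambda_{\varphi,w}$. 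The inequality $\|F\|_{(\lambda_{\varphi,w})^*} \le \|y\|^0_{\mathfrak{m}_{\varphi_*,w}} + \|S\|$ is immediate from the triangle inequality, so the whole content lies in the reverse inequality.

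Fix $\epsilon > 0$. Using the duality $\|y\|^0_{\mathfrak{m}_{\varphi_*,w}} = \sup\{\sum_i x(i)y(i) : \|x\| \le 1\}$ together with a finite truncation (justified by monotone convergence of $\sum_{i \le M}|x(i)y(i)|$, the sequence analog of replacing $f$ by a bounded function with support of finite measure), I would choose a \emph{finitely supported} $x$ with $\supp x \subset \{1,\dots,M\}$, $\|x\| \le 1$ and $\sum_i x(i)y(i) > \|y\|^0_{\mathfrak{m}_{\varphi_*,w}} - \epsilon$, and a $z$ with $\|z\| \le 1$ and $S(z) > \|S\| - \epsilon$. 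Since $z \in \lambda_{\varphi,w} \subset c_0$ is bounded, no excision of large coordinates is needed and only a tail cutoff is required. I would then pick $N \ge M$ large enough that simultaneously $\alpha_{\varphi,w}(z\chi_{\{N+1,N+2,\dots\}}) < \epsilon$ and $\sum_{i > N}|y(i)z(i)| < \epsilon$ (the latter sum being finite by Köthe duality), and set $u = x\chi_{\{1,\dots,M\}} + z\chi_{\{N+1,N+2,\dots\}}$, a sequence with disjoint supports.

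By orthogonal subadditivity of $\alpha_{\varphi,w}$ and disjointness, $\alpha_{\varphi,w}(u) \le \alpha_{\varphi,w}(x) + \alpha_{\varphi,w}(z\chi_{\{N+1,\dots\}}) \le 1 + \epsilon$, hence $\|u/(1+\epsilon)\| \le 1$. The finitely supported term $x\chi_{\{1,\dots,M\}}$ lies in $(\lambda_{\varphi,w})_a$, so $S$ annihilates it, while $S(z\chi_{\{N+1,\dots\}}) = S(z) - S(z\chi_{\{1,\dots,N\}}) = S(z)$ because $z\chi_{\{1,\dots,N\}}$ is also finitely supported. Evaluating $F$ therefore gives
\[
(1+\epsilon)\|F\| \ge F(u) = \sum_i x(i)y(i) + \sum_{i>N} z(i)y(i) + S(z) > \|y\|^0_{\mathfrak{m}_{\varphi_*,w}} + \|S\| - 3\epsilon,
\]
and letting $\epsilon \to 0$ closes the gap.

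The only step requiring genuine verification, and the natural analog of the tail estimate $\int_I \varphi((g\chi_{[n,\gamma)})^*)w \to 0$ in the function case, is the claim that $\alpha_{\varphi,w}(z\chi_{\{N+1,N+2,\dots\}}) \to 0$ as $N \to \infty$. I would establish it from two observations: restricting the support cannot increase the decreasing rearrangement, so $(z\chi_{\{N+1,\dots\}})^*(k) \le z^*(k)$ for every $k$; and, since $z \in c_0$, only finitely many coordinates exceed any given threshold, so $(z\chi_{\{N+1,\dots\}})^*(k) \to 0$, in fact uniformly in $k$, as $N \to \infty$. Dominated convergence applied to the series $\sum_k \varphi((z\chi_{\{N+1,\dots\}})^*(k))w(k)$, with summable majorant $\varphi(z^*(k))w(k)$ supplied by $\alpha_{\varphi,w}(z) \le 1$, then yields the convergence to $0$. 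Everything else is bookkeeping strictly lighter than in Theorem \ref{th:lux}, which is precisely why the details may be omitted.
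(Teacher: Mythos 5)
Your argument is correct and is exactly the ``analogous (simpler)'' adaptation of the proof of Theorem \ref{th:lux} that the paper has in mind when it omits the sequence case: the same decomposition via Theorem \ref{th:01}, the same near-optimizers for $\|y\|^0_{\mathfrak{m}_{\varphi_*,w}}$ and $\|S\|$, and the same disjointly supported test element, with the set $E$ correctly discarded because sequences in $\lambda_{\varphi,w}\subset c_0$ are bounded. The tail estimate $\alpha_{\varphi,w}(z\chi_{\{N+1,\dots\}})\to 0$ is justified properly, so nothing is missing.
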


As a consequence of Theorems \ref{th:lux} and \ref{th:luxseq} we obtain the following result.

\begin{Corollary}\label{cor:luxideal}
If $\varphi$ does not satisfy the appropriate $\Delta_2$ condition then the order-continuous subspaces $(\Lambda_{\varphi,w})_a$ and $(\lambda_{\varphi,w})_a $ are non-trivial $M$-ideals of $\Lambda_{\varphi,w}$ and $\lambda_{\varphi,w}$, respectively.
\end{Corollary}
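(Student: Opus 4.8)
The plan is to read off the $M$-projection directly from the norm-additive decomposition of Theorems \ref{th:lux} and \ref{th:luxseq}. Write $X=\Lambda_{\varphi,w}$ and $Y=(\Lambda_{\varphi,w})_a$, the sequence case being identical with $X=\lambda_{\varphi,w}$. By Theorem \ref{th:01} and the remark following it, $X^*=X_r\oplus X_s$, where $X_s=(X_a)^\perp=Y^\perp$, and every $F\in X^*$ decomposes uniquely as $F=H+S$ with $H\in X_r$ regular and $S\in X_s$ singular. I would define $P\colon X^*\to X^*$ by $P(F)=S$. Uniqueness of the representation (guaranteed by the Fatou property together with the equality $X_a=X_b$ recorded earlier) makes $P$ well defined and linear; since any singular $S$ satisfies $P(S)=S$, we have $P^2=P$, so $P$ is a projection whose range is exactly $X_s=Y^\perp$.

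The heart of the verification is the defining $M$-ideal identity $\|F\|=\|PF\|+\|F-PF\|$. Here $PF=S$ and $F-PF=H$, so this reads $\|F\|_{X^*}=\|S\|+\|H\|$, which is precisely the conclusion $\|F\|_{(\Lambda_{\varphi,w})^*}=\|h\|^0_{\mathcal{M}_{\varphi_*,w}}+\|S\|$ of Theorem \ref{th:lux}, upon recalling $\|H\|=\|h\|^0_{\mathcal{M}_{\varphi_*,w}}$. In particular, applying this to $F=S\in X_s$ gives $\|PF\|=\|S\|\le\|F\|$, so $P$ is bounded with $\|P\|\le 1$. Thus $Y^\perp$ is the range of a norm-one projection satisfying the $M$-ideal identity, and $Y$ is an $M$-ideal in $X$.

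It remains to check non-triviality, namely $Y\neq\{0\}$ and $Y\neq X$. The first is immediate, since $(\Lambda_{\varphi,w})_a$ contains every bounded function supported on a set of finite measure. For the second I would invoke the characterization, transferred to the Luxemburg norm from the Orlicz-norm statement in the introduction via the equivalence $\|\cdot\|\le\|\cdot\|^0\le 2\|\cdot\|$, that $(\Lambda_{\varphi,w})_a=\Lambda_{\varphi,w}$ holds exactly when $\varphi$ satisfies the appropriate $\Delta_2$ condition. Since by hypothesis $\varphi$ fails this condition, the descriptions $(\Lambda_{\varphi,w})_a=\{x:\rho_{\varphi,w}(\lambda x)<\infty\text{ for all }\lambda>0\}$ and $\Lambda_{\varphi,w}=\{f:\rho_{\varphi,w}(\lambda f)<\infty\text{ for some }\lambda>0\}$ differ, so $Y\subsetneq X$; equivalently $Y^\perp=X_s\neq\{0\}$, so $P$ is not the zero map. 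The sequence case follows verbatim using Theorem \ref{th:luxseq} and the $\Delta_2^0$ characterization for $\lambda_{\varphi,w}$.

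I do not anticipate a genuine obstacle: once the norm-additivity is in hand from Theorems \ref{th:lux} and \ref{th:luxseq}, the corollary is a formal consequence. The only point requiring mild care is confirming that the algebraic splitting $X^*=X_r\oplus X_s$ is a genuine direct sum making $P$ well defined, which rests entirely on the uniqueness of the regular-plus-singular representation already established.
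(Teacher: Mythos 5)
Your proposal is correct and is exactly the argument the paper intends: the corollary is stated as an immediate consequence of Theorems \ref{th:lux} and \ref{th:luxseq}, the projection onto the singular part being the $M$-projection whose norm identity is precisely the conclusion $\|F\|=\|H\|+\|S\|$ of those theorems, with non-triviality coming from the failure of the appropriate $\Delta_2$ condition. Your spelled-out verification of well-definedness of $P$ and of $Y\subsetneq X$ fills in details the paper leaves implicit, but the route is the same.
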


Recall \cite{KR,  KLR} that for an Orlicz $N$-function $\varphi$ and $h \in L^0$ we have
\begin{equation}\label{form:1}
P_{\varphi,w}(h) = \inf\left\{\int_I \varphi\left(\frac{h^*}{v}\right)v : v \prec w, v\downarrow\right\}=\int_I \varphi \left(\frac{(h^*)^0}{w}\right) w,
\end{equation}
and that similar formula holds true for any sequence $x\in\ell^0$ \cite{KLR}. Hence, we have

\[
p_{\varphi,w}(h) = \sum_{i=1}^{\infty} \varphi\left(\frac{(h^*)^0(i)}{w(i)}\right)w(i).
\]

Consider the  decreasing simple function  $h^* = \sum_{i=1}^n a_i \chi_{(t_{i-1}, t_i)}$ where $ a_1 > a_2  > \cdots > a_n >0$ and $t_0 = 0$. Let $H^*(a,b) = \int_a^b h^*$. By Algorithm A provided in \cite{KLR}, the maximal level intervals of $h^*$ are of the form $(t_{i_j}, t_{i_{j+1}})$ where $(t_{i_j})_{j=0}^{l-1}$ is a subsequence of $(t_i)_{i=1}^n$ with $0= t_0 = t_{i_0}< t_{i_1} < ... <  t_{i_{l}} = t_n < \infty$. Then, we have

\begin{equation}\label{level}
\frac{(h^*)^0}{w} = \frac{\sum_{j=0}^{l-1}R(t_{i_j}, t_{i_{j+1}})w\chi_{(t_{i_j}, t_{i_{j+1}})}}{w} = \sum_{j=0}^{l-1} R(t_{i_j}, t_{i_{j+1}}) \chi_{(t_{i_j}, t_{i_{j+1}})} =\sum_{j=0}^{l-1} \frac{H^*(t_{i_j}, t_{i_{j+1}})}{W(t_{i_j}, t_{i_{j+1}})}\chi_{(t_{i_j}, t_{i_{j+1}})} .
\end{equation}

Observe that the sequence $(R(t_{i_j}, t_{i_{j+1}}))_{j=0}^{l-1}$ is decreasing since $\frac{(h^*)^0}{w}$ is decreasing (\cite{Hal}, Theorem 3.6).  Furthermore, we obtain
\[
P_{\varphi,w}(h) = \int_I \varphi \left(\sum_{j=0}^{l-1} \frac{H^*(t_{i_j}, t_{i_{j+1}})}{W(t_{i_j}, t_{i_{j+1}})}\chi_{(t_{i_j}, t_{i_{j+1}})}\right) w = \sum_{j=0}^{l-1} \varphi \left(\frac{H^*(t_{i_j}, t_{i_{j+1}})}{W(t_{i_j}, t_{i_{j+1}})} \right) \cdot W(t_{i_j}, t_{i_{j+1}}).
\]
The next lemma is a key ingredient for computation of the norm of a bounded linear functional on $\Lambda_{\varphi,w}^0$ or $\lambda^0_{\varphi,w}$.

\begin{Lemma}\label{lem3}
Let $h \in L^0$ be a non-negative simple function with support of finite measure. Then, there exists a non-negative simple function $v$ such that
\begin{equation*}
P_{\varphi_*, w}(h) = \int_I \varphi_* \left(\frac{h}{v}\right)v \,\,\, \text{and} \,\,\, \int_I \varphi\left(q\left(\frac{h}{v}\right)\right) v = \int_I \varphi\left(q\left(\frac{h}{v}\right)^*\right) w.
\end{equation*}

The similar formula holds for modular $p_{\varphi_*, w} (x)$ for any $x\in \ell^0$.

\end{Lemma}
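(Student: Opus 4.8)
The plan is to reduce to the case where $h$ is decreasing and then read off $v$ directly from the level-interval decomposition of $h^*$. Since $\varphi$ is an $N$-function, so is $\varphi_*$, and by \eqref{form:1} we have $P_{\varphi_*,w}(h)=\int_I\varphi_*((h^*)^0/w)\,w$, a quantity depending only on the rearrangement $h^*$. Writing $h^*=\sum_{i=1}^n a_i\chi_{(t_{i-1},t_i)}$ and letting $I_j:=(t_{i_j},t_{i_{j+1}})$, $j=0,\dots,l-1$, be its maximal level intervals, \eqref{level} gives $(h^*)^0/w=\sum_{j=0}^{l-1}R_j\chi_{I_j}$, where $R_j:=R(t_{i_j},t_{i_{j+1}})=H^*(t_{i_j},t_{i_{j+1}})/W(t_{i_j},t_{i_{j+1}})$ is decreasing in $j$.

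The key choice is the simple function $\tilde v$ given by $\tilde v=h^*/R_j$ on each $I_j$ and $\tilde v=w$ off the support of $h^*$. This definition has two decisive properties. First, on $I_j$ one has $h^*/\tilde v=R_j=(h^*)^0/w$, so $h^*/\tilde v$ coincides with the \emph{decreasing} function $(h^*)^0/w$; consequently $\varphi_*(h^*/\tilde v)$ and $q(h^*/\tilde v)$ are constant on each $I_j$, equal to $\varphi_*(R_j)$ and $q(R_j)$ respectively, and $q(h^*/\tilde v)$ is itself decreasing because $(R_j)$ is decreasing and $q$ is increasing. Second, and crucially, $\int_{I_j}\tilde v=\frac{1}{R_j}\int_{I_j}h^*=H^*(t_{i_j},t_{i_{j+1}})/R_j=W(t_{i_j},t_{i_{j+1}})=\int_{I_j}w$, so $\tilde v$ and $w$ carry the same mass over every level interval. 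Hence any function that is constant on each $I_j$ and vanishes off the support integrates identically against $\tilde v$ and against $w$.

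Both identities follow at once in the decreasing case. For the first, integrating the constant value $\varphi_*(R_j)$ over $I_j$ gives $\int_I\varphi_*(h^*/\tilde v)\tilde v=\sum_{j}\varphi_*(R_j)W(t_{i_j},t_{i_{j+1}})=\int_I\varphi_*((h^*)^0/w)\,w=P_{\varphi_*,w}(h)$. For the second, since $q(h^*/\tilde v)$ is already decreasing we have $q(h^*/\tilde v)^*=q(h^*/\tilde v)$, and integrating the constant $\varphi(q(R_j))$ against $\tilde v$ and against $w$ yields $\int_I\varphi(q(h^*/\tilde v))\tilde v=\sum_{j}\varphi(q(R_j))W(t_{i_j},t_{i_{j+1}})=\int_I\varphi(q(h^*/\tilde v))\,w=\int_I\varphi(q(h^*/\tilde v)^*)\,w$. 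Note that admissibility $\tilde v\prec w$ is not needed here, since the first identity is obtained by directly matching $\int_I\varphi_*(h^*/\tilde v)\tilde v$ to the known value of $P_{\varphi_*,w}(h)$ from \eqref{form:1}.

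To pass from $h^*$ to a general non-negative simple $h$ with support $A$ of finite measure, I would fix a measure-preserving map $\sigma\colon A\to(0,mA)$ with $h=h^*\circ\sigma$ (available because $h$ is simple), and set $v=\tilde v\circ\sigma$ on $A$, $v=w$ on $I\setminus A$. Then the pair $(h,v)$ is equimeasurable with $(h^*,\tilde v)$, so $\int_I\varphi_*(h/v)v=\int_I\varphi_*(h^*/\tilde v)\tilde v$ and $\int_I\varphi(q(h/v))v=\int_I\varphi(q(h^*/\tilde v))\tilde v$, while $q(h/v)^*=q(h^*/\tilde v)^*$; both identities therefore transfer from the decreasing case. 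The sequence statement is identical, with intervals replaced by blocks of consecutive indices and $\sigma$ a permutation. The only genuinely delicate point is the bookkeeping of the level-interval decomposition \eqref{level} together with the mass-balance $\int_{I_j}\tilde v=\int_{I_j}w$; once these are in hand, everything reduces to integrating functions that are constant on the level intervals, and the remaining steps are routine.
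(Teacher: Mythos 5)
Your proposal is correct and follows essentially the same route as the paper: the function $v$ you construct (equal to $h/R_j$ on the preimage of each maximal level interval, so that $h/v=R_j$ there) is exactly the paper's choice $v=\sum_j\sum_{i_j<i\le i_{j+1}}\frac{W(t_{i_j},t_{i_{j+1}})}{H^*(t_{i_j},t_{i_{j+1}})}a_i\chi_{A_i}$, and your ``mass balance'' $\int_{I_j}\tilde v=\int_{I_j}w$ is the paper's identity (\ref{tc}) in disguise. The only cosmetic differences are that you transfer from $h^*$ to $h$ via a measure-preserving map rather than defining $v$ directly on the sets $A_i$, and that you justify $q(h/v)^*=q((h^*)^0/w)$ by monotonicity of the values $q(R_j)$ rather than by right-continuity of $q$.
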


\begin{proof}
Let $h = \sum_{i=1}^n a_i \chi_{A_i}$ with $ a_1 > a_2  > \cdots > a_n >0$  and $\{A_i\}_{i=1}^n$ be a family of disjoint  measurable subsets of $I$ with finite measure. Since $h$ and $h^*$ are equimeasurable, we see that $mA_i =  t_i- t_{i-1}$ for $i=1,\dots,n$.

 It is well known by \cite{Hal} and \cite{KLR} that each $(t_{i-1}, t_i)$ is a level interval of $h^*$, contained in at most one maximal level interval $(t_{i_j}, t_{i_{j+1}})$ for some $0 \leq j \leq l-1$ \cite{KLR}. So, for every $j$, we can see

\[
m(t_{i_j}, t_{i_{j+1}}) = m(\cup_{i_j < i \leq i_{j+1}}(t_{i-1}, t_i)) = m(\cup_{i_j < i \leq i_{j+1}}A_i),
\]

\noindent and this implies

\begin{equation}\label{tc}
H^*(t_{i_j}, t_{i_{j+1}}) = \int_{t_{i_j}}^{t_{i_{j+1}}} h^* = \sum_{i = i_j+1}^{i_{j+1}} \int_{t_{i-1}}^{t_i} a_i = \sum_{i = i_j+1}^{i_{j+1}} a_i(t_i - t_{i-1}) = \sum_{i_j < i \leq i_{j+1}} a_i mA_i.
\end{equation}

\noindent By (\ref{level}), we have
\begin{equation*}
\frac{(h^*)^0}{w} =\sum_{j=0}^{l-1} \sum_{i_j < i \leq i_{j+1}} \frac{H^*(t_{i_j}, t_{i_{j+1}})}{W(t_{i_j}, t_{i_{j+1}})} \chi_{(t_{i-1}, t_i)} =  \left(\sum_{j=0}^{l-1} \sum_{i_j < i \leq i_{j+1}} \frac{H^*(t_{i_j}, t_{i_{j+1}})}{W(t_{i_j}, t_{i_{j+1}})} \chi_{A_i}\right)^*.
\end{equation*}

\noindent Hence, by right-continuity of $q$, we also have $q\left(\frac{(h^*)^0}{w}\right) =  q\left(\sum_{j=0}^{l-1} \sum_{i_j < i \leq i_{j+1}} \frac{H^*(t_{i_j}, t_{i_{j+1}})}{W(t_{i_j}, t_{i_{j+1}})} \chi_{A_i} \right)^*$. Let $v= \sum_{j=0}^{l-1} \sum_{i_j < i \leq i_{j+1}} \frac{W(t_{i_j}, t_{i_{j+1}})}{H^*(t_{i_j}, t_{i_{j+1}})} a_i \chi_{A_i}$. Then, $q\left(\frac{(h^*)^0}{w}\right) = q\left(\frac{h}{v}\right)^*$. The functions $h$ and $v$ have the same supports, so the quotient $h/v$ is set to be zero outside of the supports of $h$ and $v$.

Now, we compute $\int_I \varphi_* \left(\frac{h}{v}\right)v$ and $\int_I \varphi\left(q\left(\frac{h}{v}\right)\right) v$.

\begin{eqnarray*}
\int_I \varphi_* \left(\frac{h}{v}\right)v &=& \int_I \varphi_* \left(\frac{\sum_{i=1}^n a_i \chi_{A_i}}{ \sum_{j=0}^{l-1} \sum_{i_j < i \leq i_{j+1}} \frac{W(t_{i_j}, t_{i_{j+1}})}{H^*(t_{i_j}, t_{i_{j+1}})} a_i \chi_{A_i}}\right) \cdot  \sum_{j=0}^{l-1} \sum_{i_j < i \leq i_{j+1}} \frac{W(t_{i_j}, t_{i_{j+1}})}{H^*(t_{i_j}, t_{i_{j+1}})} a_i \chi_{A_i}\\
&=&  \sum_{j=0}^{l-1} \sum_{i_j < i \leq i_{j+1}} \int_I \varphi_* \left(\frac{H^*(t_{i_j}, t_{i_{j+1}})}{W(t_{i_j}, t_{i_{j+1}})} \right) \cdot \frac{W(t_{i_j}, t_{i_{j+1}})}{H^*(t_{i_j}, t_{i_{j+1}})} a_i \chi_{A_i}\\
&=& \sum_{j=0}^{l-1} \varphi_* \left(\frac{H^*(t_{i_j}, t_{i_{j+1}})}{W(t_{i_j}, t_{i_{j+1}})} \right) \cdot \frac{W(t_{i_j}, t_{i_{j+1}})}{H^*(t_{i_j}, t_{i_{j+1}})} \sum_{i_j < i \leq i_{j+1}} a_i \cdot mA_i\\
&=& \sum_{j=0}^{l-1} \varphi_* \left(\frac{H^*(t_{i_j}, t_{i_{j+1}})}{W(t_{i_j}, t_{i_{j+1}})} \right) \cdot W(t_{i_j}, t_{i_{j+1}})\,\,\, \text{(by (\ref{tc}))}\\
&=& P_{\varphi_*,w}(h).
\end{eqnarray*}

\noindent and
\begin{eqnarray*}
\int_I \varphi\left(q\left(\frac{h}{v}\right)\right)v &=& \int_I \varphi\left(q\left(\frac{\sum_{i=1}^n a_i \chi_{A_i}}{ \sum_{j=0}^{l-1} \sum_{i_j < i \leq i_{j+1}} \frac{W(t_{i_j}, t_{i_{j+1}})}{H^*(t_{i_j}, t_{i_{j+1}})} a_i \chi_{A_i}}\right)\right) \cdot  \sum_{j=0}^{l-1} \sum_{i_j < i \leq i_{j+1}} \frac{W(t_{i_j}, t_{i_{j+1}})}{H^*(t_{i_j}, t_{i_{j+1}})} a_i \chi_{A_i}\\
&=& \sum_{j=0}^{l-1} \sum_{i_j < i \leq i_{j+1}} \int_I \varphi \left(q\left(\frac{H^*(t_{i_j}, t_{i_{j+1}})}{W(t_{i_j}, t_{i_{j+1}})} \right)\right) \cdot \frac{W(t_{i_j}, t_{i_{j+1}})}{H^*(t_{i_j}, t_{i_{j+1}})} a_i \chi_{A_i}\\
&=& \sum_{j=0}^{l-1} \varphi\left(q\left(\frac{H^*(t_{i_j}, t_{i_{j+1}})}{W(t_{i_j}, t_{i_{j+1}})} \right)\right) \cdot \frac{W(t_{i_j}, t_{i_{j+1}})}{H^*(t_{i_j}, t_{i_{j+1}})} \sum_{i_j < i \leq i_{j+1}} a_i \cdot mA_i\\
&=& \sum_{j=0}^{l-1} \int_I \varphi\left(q\left(\frac{H^*(t_{i_j}, t_{i_{j+1}})}{W(t_{i_j}, t_{i_{j+1}})} \right)\right) \cdot w\chi_{(t_{i_j}, t_{i_{j+1}})} \,\,\, (\text{by (\ref{tc})}) \\
&=& \int_I \varphi\left(q\left(\sum_{j=0}^{l-1} \frac{H^*(t_{i_j}, t_{i_{j+1}})}{W(t_{i_j}, t_{i_{j+1}})}\chi_{(t_{i_j}, t_{i_{j+1}})}\right)\right) w\\
&=& \int_I \varphi \left(q \left(\frac{(h^*)^0}{w}\right) \right) w = \int_I \varphi \left(q\left(\frac{h}{v}\right)^*\right)w.
\end{eqnarray*}

\end{proof}

Now, we are ready to compute the norm of a bounded linear  functional  in $\Lambda_{\varphi,w}^0$.

\begin{Theorem}\label{Orlicz}
Let $\varphi$ be an Orlicz $N$-function and $F$ be a bounded linear functional on $\Lambda^0_{\varphi,w}$. Then $F= H + S$, where $H(f) = \int_I fh$ for some
$h\in \mathcal{M}_{\varphi_*,w}$,
$\|H\|= \|h\|_{\mathcal{M}_{\varphi_*,w}}$, $S(f)=0$
 for all $f\in (\Lambda_{\varphi,w})_a$, and   $\|F\| = \inf\{\lambda>0 : P_{\varphi_*,w}(\frac{h}{\lambda}) + \frac{1}{\lambda}\|S\| \leq 1\}$.
\end{Theorem}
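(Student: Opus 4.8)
The plan is to first dispatch the structural part and then prove the norm identity by matching an upper and a lower bound. By Theorem \ref{th:01} the K\"othe dual of $\Lambda^0_{\varphi,w}$ is $\mathcal{M}_{\varphi_*,w}$ with equal norms, so the regular/singular splitting recalled before Theorem \ref{th:lux} gives $F=H+S$ with $H(f)=\int_I hf$, $h\in\mathcal{M}_{\varphi_*,w}$, $\|H\|=\|h\|_{\mathcal{M}_{\varphi_*,w}}$ (the \emph{Luxemburg} norm of the associate space), and $S$ vanishing on $(\Lambda_{\varphi,w})_a$; by Theorem \ref{theta}, $\|S\|$ is the same for both norms. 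Writing $\Psi(\lambda):=P_{\varphi_*,w}(h/\lambda)+\frac1\lambda\|S\|$ (decreasing in $\lambda$) and $\lambda_0:=\inf\{\lambda>0:\Psi(\lambda)\le1\}$, it remains to show $\|F\|=\lambda_0$. The first tool I would establish is the modular Young inequality $\int_I|hf|\le\rho_{\varphi,w}(f)+P_{\varphi_*,w}(h)$: bound $\int_I|hf|\le\int_I f^*h^*$ by Hardy--Littlewood, apply the pointwise Young inequality to $f^*$ and $h^*/\nu$ for an admissible $\nu\prec w$, $\nu\downarrow$, and use Hardy's lemma (the decreasing function $\varphi(f^*)$ tested against $\nu\prec w$) to replace $\int\varphi(f^*)\nu$ by $\rho_{\varphi,w}(f)$; taking the infimum over $\nu$ and invoking \eqref{form:1} for $\varphi_*$ gives the claim, and rescaling $f\mapsto kf$, $h\mapsto h/\lambda$ yields $\int_I|hf|\le\frac\lambda k\bigl(\rho_{\varphi,w}(kf)+P_{\varphi_*,w}(h/\lambda)\bigr)$ for all $k,\lambda>0$.

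For the upper bound $\|F\|\le\lambda_0$, fix $\lambda$ with $\Psi(\lambda)\le1$ and $f$ with $\|f\|^0\le1$. Choose $k\in K(f)$, so that by Theorem \ref{WC}$(3)$ one has $\|f\|^0=\frac1k(1+\rho_{\varphi,w}(kf))\le1$; this forces $k\ge1$ and $\rho_{\varphi,w}(kf)\le k-1<\infty$, whence $\theta(f)\le1/k$. Combining the scaled Young inequality with the estimate $S(f)\le\|S\|\theta(f)\le\|S\|/k$ (which follows from the defining formula for $\|S\|$ in Theorem \ref{theta}) and the bound $\rho_{\varphi,w}(kf)\le k-1$, a short computation gives
\[
F(f)\le\lambda-\frac1k\bigl(\lambda(1-P_{\varphi_*,w}(h/\lambda))-\|S\|\bigr)=\lambda-\frac\lambda k\bigl(1-\Psi(\lambda)\bigr)\le\lambda,
\]
since $\Psi(\lambda)\le1$. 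Replacing $f$ by $-f$ gives $|F(f)|\le\lambda$, so $\|F\|\le\lambda$ for every such $\lambda$, i.e.\ $\|F\|\le\lambda_0$.

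For the lower bound $\|F\|\ge\lambda_0$ I would show $\|F\|\ge\lambda$ for each $\lambda<\lambda_0$, i.e.\ whenever $\Psi(\lambda)>1$, by building a near-extremal test function. After reducing to $h$ a non-negative simple function of finite-measure support (approximating $h$ from below and using monotone convergence for $P_{\varphi_*,w}$ and for $H$), apply Lemma \ref{lem3} to $h/\lambda$ to produce a simple $v$ and set $f_1=(\sign h)\,q(h/(\lambda v))$. The pointwise Young \emph{equality} $s\,q(s)=\varphi_*(s)+\varphi(q(s))$ together with the two identities of Lemma \ref{lem3} then yields the exact value $\int_I hf_1=\lambda\bigl(P_{\varphi_*,w}(h/\lambda)+\rho_{\varphi,w}(f_1)\bigr)$; write $\beta:=\rho_{\varphi,w}(f_1)$. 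For the singular part, Theorem \ref{theta} provides $g$ with $\rho_{\varphi,w}(g)<\infty$ and $S(g)>\|S\|-\epsilon$, and truncating $g$ as in Theorem \ref{thm5} (dominated convergence) replaces it by a function of arbitrarily small modular that still carries the full value of $S$. Gluing $f_1$ and this truncated $g$ on essentially disjoint supports exactly as in the proof of Theorem \ref{th:lux} (so that $S$ sees only $g$ and the cross integrals $\int hg$ are negligible), I estimate the glued $u$ by the Amemiya bound $\|u\|^0\le1+\rho_{\varphi,w}(u)\le1+\beta+\epsilon'$ (Theorem \ref{WC}$(2)$ at $k=1$, with orthogonal subadditivity), while $F(u)\ge\lambda\bigl(P_{\varphi_*,w}(h/\lambda)+\beta\bigr)+\|S\|-O(\epsilon)=\lambda\bigl(\Psi(\lambda)+\beta\bigr)-O(\epsilon)$. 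Since $\Psi(\lambda)>1$, one has $\lambda(\Psi(\lambda)+\beta)>\lambda(1+\beta)$, so the ratio $F(u)/\|u\|^0$ exceeds $\lambda-O(\epsilon)$; letting $\epsilon\to0$ and $\lambda\uparrow\lambda_0$ gives $\|F\|\ge\lambda_0$.

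The step I expect to be most delicate is the lower bound, and within it two points. First, the reduction to simple finite-support $h$, needed so that Lemma \ref{lem3} (hence the exact Young equality for $f_1$) applies, together with the control of $\int hf_1$ when passing back to the original $h$. Second, the handling of the singular test function, where the crucial and somewhat counterintuitive feature is that a singular $g$ can be arranged to have negligible modular $\rho_{\varphi,w}(g)$ while retaining $\theta(g)$ bounded away from $0$ and the full value $S(g)\approx\|S\|$; this decoupling of modular from norm is precisely what keeps the glued function's modular near $1+\beta$ yet captures all of $\|S\|$, and it is available exactly because $\varphi$ fails the appropriate $\Delta_2$ condition whenever $\|S\|>0$. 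The remaining bookkeeping---disjointification of supports and the smallness of the cross terms---parallels the proof of Theorem \ref{th:lux} and is routine.
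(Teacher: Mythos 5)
Your proposal is correct and follows essentially the same route as the paper's proof: the upper bound via Young's inequality, Hardy's lemma and a point $k\in K(f)$ with $\|f\|^0=\frac1k(1+\rho_{\varphi,w}(kf))$, and the lower bound via Lemma \ref{lem3}, Young's equality, and gluing a near-extremal simple function for the regular part with a small-modular truncation that carries $S$, finished with the Amemiya bound $\|u\|^0\le 1+\rho_{\varphi,w}(u)$. The only differences are presentational: the paper normalizes $\|F\|=1$ and argues by contradiction at $\lambda=1$ rather than directly for each $\lambda<\lambda_0$, and it phrases the singular estimate as $S(kf)\le\|S\|$ whenever $\rho_{\varphi,w}(kf)<\infty$ instead of through $\theta(f)\le 1/k$.
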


\begin{proof}
Similarly as in Theorem \ref{th:lux}, we have $F = H + S$, where $H(f) = \int_I hf$ for some $h \in \mathcal{M}_{\varphi_*,w}$ with $\|H\| = \|h\|_{\mathcal{M}_{\varphi_*,w}}$ and $S(f) = 0$ for all $f \in (\Lambda_{\varphi,w}^0)_a$ in view of Theorem \ref{th:01}. Thus, we only need to show the formula for $\|F\|$. Without loss of generality, assume $\|F\| = 1$. Let $f \in S_{\Lambda_{\varphi,w}^0}$. Since $h \in \mathcal{M}_{\varphi_*,w}$, we have $P_{\varphi_*,w}(\frac{h}{\lambda}) < 1$ for some $\lambda>0$. So, we can choose $\lambda>0$ such that $P_{\varphi_*,w}(\frac{h}{\lambda})+ \frac{1}{\lambda} \|S\| \leq 1$. Let $k \in K(f)$. By Theorem \ref{WC}.(3), $1 = \|f\|^0 = \frac{1}{k}(1 + \rho_{\varphi,w}(kf))$, and this implies that $\rho_{\varphi,w}(kf) < \infty$. For every $v \prec w,\, v \downarrow$, we have

\[
\frac{1}{\lambda}F(kf) = \frac{1}{\lambda}\left(\int_I khf + S(kf)\right) \leq \frac{1}{\lambda} \left(\int_I kh^*f^* + S(kf)\right) = \int_I \frac{kh^*f^*v}{\lambda v} + \frac{1}{\lambda} S(kf).
\]

\noindent By Young's inequality, we see that $\int_I \frac{kh^*f^*v}{\lambda v} + \frac{1}{\lambda} S(kf) \leq  \int_I \varphi(kf^*)v + \int_I \varphi_*\left(\frac{h^*}{\lambda v}\right)v + \frac{1}{\lambda}S(kf)$. Since by (\ref{form:1}) this is for all $v \prec w$, $v\downarrow$, and by Hardy's lemma (\cite{BS}, Proposition 3.6), we get

\[
\frac{1}{\lambda}F(kf) \leq \int_I \varphi(kf^*)v + \int_I \varphi_*\left(\frac{h^*}{\lambda v}\right)v + \frac{1}{\lambda}S(kf) \leq\rho_{\varphi,w}(kf) + P_{\varphi_*,w}\left(\frac{h}{\lambda}\right) + \frac{1}{\lambda}S(kf)).
\]

\noindent Furthermore, $S(kf) \leq \|S\|$ because $\rho_{\varphi,w}(kf) < \infty$. Hence,

\[
\frac{1}{\lambda}F(kf) \leq \rho_{\varphi,w}(kf) + P_{\varphi_*,w}\left(\frac{h}{\lambda}\right) + \frac{1}{\lambda}\|S\| \leq 1+ \rho_{\varphi,w}(kf),
\]

\noindent which implies that $F(f) \leq \lambda \cdot \frac{1}{k}(1 + \rho_{\varphi,w}(kf)) \leq \lambda \|f\|^0=\lambda$. Since $f$ and $\lambda$ are arbitrary, we showed that $\|F\| \leq \inf\{\lambda>0 : P_{\varphi_*,w}(\frac{h}{\lambda}) + \frac{1}{\lambda} \|S\| \leq 1 \}$.

Now, suppose that

\[
1= \|F\| < \inf\{\lambda>0 : P_{\varphi_*,w}\left(\frac{h}{\lambda}\right) + \frac{1}{\lambda}\|S\| \leq 1\}.
\]

\noindent Then, there exists $\delta>0$ such that

\[
 P_{\varphi_*,w}(h) + \|S\| > 1 + 3\delta.
\]

\noindent From Theorem \ref{theta}, $\|S\| = \sup\{S(f): \rho_{\varphi,w}(f) < \infty\}$. So, there exists $f \in \Lambda_{\varphi,w}^0$ such that $\rho_{\varphi,w}(f) < \infty$ and  $\|S\| <  S(f) + \delta$. This implies that

\[
 P_{\varphi_*,w}(h) + S(f)+ \delta >P_{\varphi_*,w}(h) + \|S\| > 1 + 3\delta,
\]

\noindent and so

\[
P_{\varphi_*,w}(h) + S(f) > 1 + 2\delta.
\]

Without loss of generality, let $h \geq 0$. Let $(h_n)_{n=1}^{\infty}$ be a sequence of simple functions with support of finite measure such that $h_n \uparrow h$. By Lemma 4.6 in \cite{KR}, we get $P_{\varphi_*,w}(h_n) \uparrow P_{\varphi_*,w}(h)$. Hence, there exists a non-negative simple function $h_0$ with $m(\supp{h_0}) < \infty$ such that $0\le h_0 \le h$ a.e. and

\[
P_{\varphi_*,w}(h) < P_{\varphi_*,w}(h_0) + \delta.
\]

\noindent This implies that

\[
P_{\varphi_*,w}(h_0) + S(f) > P_{\varphi_*,w}(h) + S(f) - \delta > 1 + 2\delta - \delta = 1 + \delta.
\]

Now, consider a function $f_n = f \chi_{\{\frac{1}{n}\leq |f| \leq n\}}$. The function $|f-f_n| \downarrow 0$ a.e. Hence, we have $(f- f_n)^* \rightarrow 0$, and so $\rho_{\varphi,w}(f-f_n) \downarrow 0$ by the Lebesgue dominated convergence theorem.
Since $H$ is a bounded linear functional on $\Lambda_{\varphi,w}^0$, we have $\int_I |f-f_n| h \le \int_I |f| h < \|H\|\|f\|^0< \infty$, and so $\int_I |f-f_n| h \to 0$. For $\delta > 0$, there exists $N_0$  such that for $n\ge N_0$, we have

\[
\rho_{\varphi,w}(f-f_n) \le 1  \ \ \ \text{and} \ \ \
\int_I |f-f_n| h < \frac{\delta}{8}.
\]

\noindent Let $g_1 = f-f_n$ for some $n\ge N_0$. The function $f_n$ is bounded with support of finite measure since $\supp{f_n} \subset \{|f| > \frac{1}{n}\}$ and $m\{|f| > \frac{1}{n}\} < \infty$. Thus, we have $S(f) = S(g_1) + S(f_n) = S(g_1)$ and

\begin{equation}\label{eq1}
\rho_{\varphi,w}(g_1) \leq 1, \,\,\, \int_I |g_1| h < \frac{\delta}{8}, \  \ \text{and} \,\,\, P_{\varphi_*,w}(h_0) + S(g_1) > 1 + \delta.
\end{equation}

Let $v$ be the non-negative simple function constructed in Lemma \ref{lem3} for $h_0$. By Young's equality, we obtain

\[
\int_I q\left(\frac{h_0}{v}\right)h_0 = \int_I q\left(\frac{h_0}{v}\right)\frac{h_0}{v} v = \int_I \varphi\left(q\left(\frac{h_0}{v}\right)\right)v + \int_I \varphi_*\left(\frac{h_0}{v}\right)v = \int_I \varphi\left(q\left(\frac{h_0}{v}\right)\right)v + \int_I \varphi_*\left(\frac{h_0}{v}\right)v.
\]

\noindent Let $g_2 = q\left(\frac{h_0}{v}\right)$. It is a simple function with support of finite measure, so $g_2 \in (\Lambda_{\varphi,w}^0)_a$. In view of Lemma \ref{lem3}, we get

\begin{equation}\label{eq2}
P_{\varphi_*,w}(h_0) = \int_I \varphi_*\left(\frac{h_0}{v}\right)v = \int_I q\left(\frac{h_0}{v}\right)h_0 - \int_I \varphi\left(q\left(\frac{h_0}{v}\right)\right)v = \int_I g_2 h_0 - \int_I \varphi(g_2^*)w.
\end{equation}

The function $g_2h$ is integrable. So, there exists $\eta>0$ such that for any measurable subset $E \subset I$ with $mE < \eta$, we have $\int_E |g_2h| < \frac{\delta}{2}$. We will now show that for $\delta>0$, there exist $n \in \mathbb{N}$ and $E \subset I$ such that $mE <\eta$,
\begin{equation}\label{eq3}
\int_0^{mE} \varphi (g_1^*) w < \frac{\delta}{4}, \ \ \int_E |g_2 h| < \frac{\delta}{2}, \ \  \text{and} \ \ \rho_{\varphi,w}(g_1 \chi_{[n, \gamma)}) = \int_I\varphi((g_1 \chi_{[n, \gamma)})^*)w < \frac{\delta}{8}.
\end{equation}

\noindent Let $E_n=\{g_1^*> n\}= [0, t_n)$. We see that $g_1^* \chi_{E_n} \leq g_1^*$  for all $n$ and $g_1^* \chi_{E_n} \rightarrow 0$ a.e. By the Lebesgue dominated convergence theorem, for $\delta > 0$, there exists $N_1$ such that for all $n \geq N_1$,

\[
\int_I \varphi(g_1^*\chi_{E_n})w = \int_0^{mE_n} \varphi(g_1^*)w<  \frac{\delta}{4}.
\]

\noindent Since $g_1$ and $g_1^*$ are equimeasurable, we have $m\{|g_1|>n\} = m\{g_1^* > n\} = mE_n$ for all $n$. Choose $n > N_1$ such that $mE_n < \eta$, $\supp{h_0} \cap [n, \gamma) = \emptyset$, and $\rho_{\varphi,w}(g_1 \chi_{[n, \gamma)}) = \int_I\varphi((g_1 \chi_{[n, \gamma)})^*)w < \frac{\delta}{8}$. Finally, by letting $\{|g_1|>n\} = E$ for such $n$, we obtain $n \in \mathbb{N}$ and a measurable subset $E \subset I$ satisfying (\ref{eq3}). Note that $\supp{h_0} \subset [0, n)$.

Now, we define
\[\bar{u}(t) =
\begin{cases}
g_2(t), & t \in A_1 = \supp{h_0} \setminus E\\
g_1(t),& t \in A_2 = E \cup [n, \gamma) \\
0, & \text{Otherwise}.

\end{cases}
\]

\noindent The function $g_1$ is bounded on the set $A_2^c$. Moreover, $A_2^c$ is a subset of $[0,n)$. So, $g_1\chi_{A_2^c} \in (\Lambda_{\varphi,w}^0)_a$, and this implies that $S(g_1) = S(g_1 \chi_{A_2})$. Since $g_2$ is a simple function with support of finite measure, $S(g_2\chi_{A_1}) = 0$. By orthogonal subadditivity of $\rho_{\varphi,w}$, we get

\[
\rho_{\varphi,w}(\bar{u}) \leq \rho_{\varphi,w}(g_2 \chi_{A_1}) + \rho_{\varphi,w}(g_1 \chi_{A_2}) \leq \rho_{\varphi,w}(g_2 \chi_{A_1}) + \rho_{\varphi,w}(g_1 \chi_E) + \rho_{\varphi,w}(g_1\chi_{[n, \gamma)}),
\]

\noindent and by (\ref{eq3}), we have

\[
\rho_{\varphi,w}(\bar{u}) < \rho_{\varphi,w}(g_2 \chi_{A_1}) + \rho_{\varphi,w}(g_1 \chi_E) + \frac{\delta}{8}.
\]

\noindent Hence, we see that

\begin{equation}\label{eq6}
\int_I \bar{u}h + S(\bar{u}) - \rho_{\varphi,w}(\bar{u}) \geq \int_{A_1} g_2 h + \int_{A_2} g_1 h +  S(g_1) - \rho_{\varphi, w} (g_2\chi_{A_1}) - \rho_{\varphi,w}(g_1 \chi_E) - \frac{\delta}{8}.
\end{equation}

\noindent Since $g_2 \ge 0$ and $h \ge h_0 \ge 0$, we have

\[
\int_{A_1} g_2 h\ge \int_{A_1} g_2 h_0 = \int_{I \setminus E} g_2 h_0.
\]

\noindent Also, in view of (\ref{eq1}) and (\ref{eq3}), we see that

\[
\int_{A_2} |g_1 h| < \int_I |g_1 h| < \frac{\delta}{8} \ \ \ \text{and} \ \ \  \int_E g_2h_0 \leq \int_E g_2h < \frac{\delta}{2}.
\]

\noindent Then, the inequality (\ref{eq6}) becomes

\[
\int_I h\bar{u} + S(\bar{u}) - \rho_{\varphi,w}(\bar{u}) \ge
\int_{I \setminus E} g_2 h_0 -\frac{\delta}{4} +  S(g_1) - \rho_{\varphi, w} (g_2\chi_{A_1}) - \rho_{\varphi,w}(g_1\chi_{E}).
\]

\noindent Hence, we obtain
\begin{eqnarray*}
\int_I \bar{u}h + S(\bar{u}) - \rho_{\varphi,w}(\bar{u}) &\geq& \int_{I \setminus E} g_2 h_0 -\frac{\delta}{4} +  S(g_1) - \rho_{\varphi, w} (g_2\chi_{A_1}) - \rho_{\varphi,w}(g_1\chi_{E}) \\
&\geq& \int_I g_2 h_0 - \int_E g_2 h_0 - \frac{\delta}{4} + S(g_1) - \rho_{\varphi, w} (g_2) - \rho_{\varphi,w}(g_1 \chi_{E})\\
&\geq& \int_I g_2 h_0 - \int_E g_2 h_0  - \frac{\delta}{4}+ S(g_1) - \rho_{\varphi, w} (g_2) - \frac{\delta}{4}\,\,\, \text{by (\ref{eq3})}\\
&=& P_{\varphi_*,w}(h_0) -\int_E g_2h_0 +  S(g_1) - \frac{\delta}{2} \,\,\, \text{by (\ref{eq2})}\\
&\geq& P_{\varphi_*,w}(h_0)- \frac{\delta}{2} + S(g_1) - \frac{\delta}{2} \\
&>& 1+ \delta - \delta = 1. \,\,\, \text{by (\ref{eq1})}
\end{eqnarray*}

\noindent Finally, this implies that
\[
1 = \|F\| \geq F\left(\frac{\bar{u}}{\|\bar{u}\|^0}\right) = \frac{H(\bar{u}) + S(\bar{u})}{\|\bar{u}\|^0} = \frac{\int_I \bar{u}h+ S(\bar{u})}{\|\bar{u}\|^0}> \frac{1 + \rho_{\varphi,w}(\bar{u})}{\|\bar{u}\|^0} > 1,
\]
\noindent which leads to a contradiction.

\end{proof}

Next result is  the sequence analogue of the formula for the norm of a bounded linear functional on  $\lambda^0_{\varphi,w}$.

\begin{Theorem}\label{Orliczseq}
If $\varphi$ is an Orlicz $N$-function and $F$ is a bounded linear functional on $\lambda^0_{\varphi,w}$  then $F = H+S$, where $H(x) = \sum_{i=1}^{\infty} x(i) y(i)$, $ \|H\| =  \| y\|_{\mathfrak{m}_{\varphi_*,w}}$, $S$ is a singular functional vanishing on $(\lambda_{\varphi,w})_a$ and $\|F\| = \inf\{\eta>0 : p_{\varphi_*,w}(\frac{h}{\eta}) + \frac{1}{\eta}\|S\| \leq 1\}$.
\end{Theorem}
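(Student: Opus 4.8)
The plan is to transcribe the argument of Theorem \ref{Orlicz} into the sequence setting, replacing the modular $\rho_{\varphi,w}$ by $\alpha_{\varphi,w}$, the integral $\int_I$ by the sum $\sum_{i=1}^\infty$, the modular $P_{\varphi_*,w}$ by $p_{\varphi_*,w}$, the space $\mathcal{M}_{\varphi_*,w}$ by $\mathfrak{m}_{\varphi_*,w}$, and invoking the sequence version of every cited tool. First I would use the sequence part of Theorem \ref{th:01} together with the decomposition $X^* = X_r \oplus X_s$ to write $F = H + S$ uniquely, where $H(x) = \sum_i x(i)y(i)$ for a representing sequence $y \in \mathfrak{m}_{\varphi_*,w}$ with $\|H\| = \|y\|_{\mathfrak{m}_{\varphi_*,w}}$, and $S$ vanishes on $(\lambda_{\varphi,w})_a$; by the sequence version of Theorem \ref{theta} the number $\|S\|$ is the same whether the Luxemburg or the Orlicz norm is used. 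It then remains only to establish the formula for $\|F\|$, and I normalize $\|F\| = 1$.

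For the upper bound $\|F\| \le \inf\{\eta : p_{\varphi_*,w}(y/\eta) + \tfrac1\eta\|S\| \le 1\}$, fix $x \in S_{\lambda^0_{\varphi,w}}$ and choose $k \in K(x)$, so that by the sequence form of Theorem \ref{WC}.(3) we have $1 = \|x\|^0 = \tfrac1k(1 + \alpha_{\varphi,w}(kx))$, and in particular $\alpha_{\varphi,w}(kx) < \infty$. For any admissible $\eta$ and any $v \prec w$, $v\downarrow$, I apply Young's inequality termwise to $kx^*(i)y^*(i)/\eta$; Hardy's lemma (the sequence analogue of Proposition 3.6 in \cite{BS}) bounds the resulting $\varphi$-sum by $\alpha_{\varphi,w}(kx)$ uniformly in $v$, while taking the infimum over $v$ in the $\varphi_*$-sum produces $p_{\varphi_*,w}(y/\eta)$ by the sequence analogue of (\ref{form:1}). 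Since $\alpha_{\varphi,w}(kx)<\infty$ forces $S(kx) \le \|S\|$, this yields $\tfrac1\eta F(kx) \le \alpha_{\varphi,w}(kx) + p_{\varphi_*,w}(y/\eta) + \tfrac1\eta\|S\| \le 1 + \alpha_{\varphi,w}(kx)$, whence $F(x) \le \eta\,\tfrac1k(1+\alpha_{\varphi,w}(kx)) = \eta$. As $x$ and $\eta$ are arbitrary the bound follows.

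For the reverse inequality I argue by contradiction, assuming $1 = \|F\| < \inf\{\eta : p_{\varphi_*,w}(y/\eta) + \tfrac1\eta\|S\| \le 1\}$, which produces $\delta > 0$ with $p_{\varphi_*,w}(y) + \|S\| > 1 + 3\delta$. Using the sequence form of Theorem \ref{theta} I select $x$ with $\alpha_{\varphi,w}(x) < \infty$ and $\|S\| < S(x) + \delta$, then replace $y$ by a finitely supported $y_0 \le y$ via the sequence analogue of Lemma 4.6 in \cite{KR} (so that $p_{\varphi_*,w}(y_0)$ approximates $p_{\varphi_*,w}(y)$ from below), obtaining $p_{\varphi_*,w}(y_0) + S(x) > 1 + \delta$. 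The truncation $x_n = x\chi_{\{1,\dots,n\}}$ gives a tail $g_1 = x - x_n$ with $S(x) = S(g_1)$, $\alpha_{\varphi,w}(g_1) \le 1$, and $\sum_i |g_1(i)y(i)|$ small; applying the sequence version of Lemma \ref{lem3} to $y_0$ furnishes a sequence $v$ and $g_2 = q(y_0/v) \in (\lambda_{\varphi,w})_a$ for which Young's equality gives the analogue of (\ref{eq2}), namely $p_{\varphi_*,w}(y_0) = \sum_i g_2(i)y_0(i) - \alpha_{\varphi,w}(g_2)$. I then glue $g_2$ (on a finite index set disjoint from the tail and from the large-coordinate set $E=\{|g_1|>n\}$) and $g_1$ (on $E$ together with the tail) into a single sequence $\bar u$: orthogonal subadditivity of $\alpha_{\varphi,w}$ controls the modular, order-continuity of the finitely supported pieces annihilates the corresponding values of $S$, and the small-set estimates bound the cross terms. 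These assemble to $\sum_i \bar u(i)y(i) + S(\bar u) - \alpha_{\varphi,w}(\bar u) > 1$, and since the sequence form of Theorem \ref{WC}.(2) gives $\|\bar u\|^0 \le 1 + \alpha_{\varphi,w}(\bar u)$, this forces $F(\bar u/\|\bar u\|^0) > 1 = \|F\|$, a contradiction.

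I expect the calculations to be genuinely simpler than in the function case: supports are index sets, the decreasing rearrangement is a reordering, and the truncation $x_n = x\chi_{\{1,\dots,n\}}$ makes $g_1$ a tail automatically. The main obstacle is the correct transcription and application of Lemma \ref{lem3} to sequences, in particular verifying that the maximal level intervals of $y_0^*$ yield a sequence $v$ satisfying both $p_{\varphi_*,w}(y_0) = \sum_i \varphi_*(y_0(i)/v(i))\,v(i)$ and the rearrangement identity for $q(y_0/v)$, so that Young's equality can be invoked exactly as in the function proof. Beyond that, the bookkeeping in the gluing step—tracking which pieces lie in $(\lambda_{\varphi,w})_a$ so that $S$ kills them, and keeping the index sets disjoint enough for orthogonal subadditivity—mirrors the proof of Theorem \ref{Orlicz} line for line, which is why I would abbreviate it.
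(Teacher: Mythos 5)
Your proposal is correct and is exactly what the paper intends: the paper states Theorem \ref{Orliczseq} without proof as the sequence analogue of Theorem \ref{Orlicz}, and your plan faithfully transcribes that function-space argument (decomposition via Theorem \ref{th:01}, Young's inequality plus Hardy's lemma for the upper bound, and the contradiction argument built on the sequence version of Lemma \ref{lem3}) into the sequence setting. Your observation that the tail structure of $x_n = x\chi_{\{1,\dots,n\}}$ and the finiteness of $\supp y_0$ make the gluing and the exceptional-set bookkeeping simpler is accurate and consistent with the paper's remark that the sequence proofs are simpler.
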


Contrary to Corollary \ref{cor:luxideal} about $M$-ideals in the Orlicz-Lorentz spaces equipped with the Luxemburg norm,  we conclude this paper by showing that $(\Lambda_{\varphi,w}^0)_a$ and $(\lambda_{\varphi,w}^0)_a$ are not $M$-ideals in $\Lambda_{\varphi,w}^0$ and $\lambda_{\varphi,w}^0$ respectively, if the Orlicz $N$-function $\varphi$ does not satisfy the appropriate $\Delta_2$ condition.

\begin{Corollary}
Let $\varphi$ be an Orlicz $N$-function which does not satisfy the appropriate $\Delta_2$ condition. Then the  order-continuous subspaces $(\Lambda_{\varphi,w}^0)_a$ or $(\lambda_{\varphi,w}^0)_a$ are not $M$-ideals in $\Lambda_{\varphi,w}^0$ or $\lambda_{\varphi,w}^0$, respectively.
\end{Corollary}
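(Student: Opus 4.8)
The plan is to reduce the $M$-ideal property to an additivity property of the dual norm and then to violate it using the formula of Theorem \ref{Orlicz}. Write $X=\Lambda^0_{\varphi,w}$ and $Y=(\Lambda^0_{\varphi,w})_a=X_a$, so that $Y^\perp=X_s$ is the space of singular functionals and $X^*=X_r\oplus X_s$. Since $\varphi$ fails the appropriate $\Delta_2$ condition, $(\Lambda^0_{\varphi,w})_a\neq\Lambda^0_{\varphi,w}$, hence $X_s\neq\{0\}$ and we may fix a singular functional $S\neq 0$; this is the only place the hypothesis is used. I will show that $X_a$ being an $M$-ideal forces $\|H+S\|=\|H\|+\|S\|$ for every regular $H\in X_r$ and every singular $S\in X_s$, and then exhibit a pair for which the formula of Theorem \ref{Orlicz} gives strict inequality.

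First I would establish the reduction. Suppose $Y$ is an $M$-ideal, so $X_s=Y^\perp$ is an $L$-summand: there is an $L$-projection $Q$ with $\ker Q=X_s$, range $W$, and $\|F\|=\|QF\|+\|F-QF\|$ for all $F\in X^*$. For a regular $H$ one has $H-QH\in X_s$, so $QH$ and $H$ agree on $X_a$; since a regular functional attains its norm on $X_a$ (indeed $\|H\|_{X^*}=\|h\|_{\mathcal M_{\varphi_*,w}}=\|H|_{X_a}\|_{(X_a)^*}$ by Theorem \ref{Orlicz}), this gives $\|QH\|\ge \|H|_{X_a}\|=\|H\|$. Combined with $\|H\|=\|QH\|+\|H-QH\|$ this forces $\|H-QH\|=0$, i.e. $QH=H$. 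Hence $X_r\subseteq W$, and since $X_r$ and $W$ are both algebraic complements of $X_s$ in $X^*$, it follows that $W=X_r$. Thus the $L$-decomposition is the canonical one and $\|H+S\|=\|H\|+\|S\|$ must hold for all $H\in X_r$, $S\in X_s$.

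Next I would contradict this additivity. Fix $S\neq 0$ as above with $s:=\|S\|>0$, and choose a convenient regular functional $H$, say with nonnegative simple density $h$ of finite-measure support, and put $\lambda_0:=\|H\|=\|h\|_{\mathcal M_{\varphi_*,w}}$, so that $P_{\varphi_*,w}(h/\lambda_0)\le 1$. By Theorem \ref{Orlicz},
\[
\|H+S\|=\inf\Big\{\lambda>0:\ P_{\varphi_*,w}(h/\lambda)+\tfrac{1}{\lambda}\|S\|\le 1\Big\}.
\]
Evaluating the constraint at $\lambda=\lambda_0+s$ and writing $t=\lambda_0/(\lambda_0+s)\in(0,1)$, convexity of $P_{\varphi_*,w}$ together with $P_{\varphi_*,w}(0)=0$ yields $P_{\varphi_*,w}(h/(\lambda_0+s))=P_{\varphi_*,w}\big(t\,h/\lambda_0\big)\le t\,P_{\varphi_*,w}(h/\lambda_0)\le t$, whence $P_{\varphi_*,w}(h/(\lambda_0+s))+s/(\lambda_0+s)\le t+s/(\lambda_0+s)=1$, in agreement with the triangle inequality. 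The point is that this inequality is strict: because $\varphi_*$ is again an Orlicz $N$-function it is strictly convex on a nondegenerate interval, and by scaling $h$ so that the values of $(h^*)^0/w$ fall in such an interval the bound $P_{\varphi_*,w}(t\,h/\lambda_0)\le t\,P_{\varphi_*,w}(h/\lambda_0)$ becomes strict. Then the constraint holds with strict inequality at $\lambda=\lambda_0+s$, so the infimum is attained below $\lambda_0+s$, giving $\|H+S\|<\|H\|+\|S\|$. This contradicts the additivity forced by the $M$-ideal property, so $(\Lambda^0_{\varphi,w})_a$ is not an $M$-ideal. The sequence case is identical, using Theorem \ref{Orliczseq} and $p_{\varphi_*,w}$ in place of Theorem \ref{Orlicz} and $P_{\varphi_*,w}$.

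The main obstacle is the reduction step together with securing a strict inequality. Identifying the $L$-complement as $X_r$ is what makes the additivity criterion available, and the crux there is that adding a singular functional never decreases the norm, so a regular $H$ cannot be moved by the $L$-projection. The strict inequality is delicate because for an order-continuous density $h$ one has $P_{\varphi_*,w}(h/\lambda_0)=1$, so no slack comes from a modular gap; the strictness must instead be extracted from the strict convexity of the $N$-function $\varphi_*$. This is why the $N$-function hypothesis, rather than a failure of $\Delta_2$ for $\varphi_*$, is what drives the computation, the $\Delta_2$ failure of $\varphi$ being used only to guarantee that $S\neq0$ exists.
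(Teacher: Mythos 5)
Your overall strategy coincides with the paper's: both arguments extract from the $M$-ideal hypothesis the additivity $\|H+S\|=\|H\|+\|S\|$ across the canonical decomposition $X^*=X_r\oplus X_s$, and then contradict it using the dual-norm formula of Theorem \ref{Orlicz}, the contradiction ultimately resting on the strict inequality $\varphi_*(\lambda u)>\lambda\varphi_*(u)$ for $\lambda>1$, $u>0$. Your reduction step (showing that the complementary summand of the $L$-decomposition must be $X_r$, so that the $L$-decomposition is the canonical one) is carried out more carefully than in the paper, which simply asserts $1=\|H+S\|=\|h\|_{\mathcal M_{\varphi_*,w}}+\|S\|$ under the $M$-ideal assumption; that part of your write-up is correct and is a worthwhile addition. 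The paper works with the specific density $h=uw\chi_{(0,t_0)}$ normalized so that $\|h\|_{\mathcal M_{\varphi_*,w}}+\|S\|=1$ and shows $P_{\varphi_*,w}(h)<\|h\|_{\mathcal M_{\varphi_*,w}}$, while you keep a general simple $h$ and locate the slack at $\lambda=\lambda_0+\|S\|$; these are the same computation in different clothing.

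The genuine gap is in your justification of strictness. It is not true that an Orlicz $N$-function must be strictly convex on some nondegenerate interval: a piecewise linear convex function with slopes increasing to $\infty$ at infinity and decreasing to $0$ at the origin is an $N$-function that is affine on every piece, hence strictly convex on no nondegenerate interval, and rescaling $h$ so that the values of $(h^*)^0/w$ land in a prescribed window cannot manufacture strict convexity that is not there. As written, the step ``the bound $P_{\varphi_*,w}(th/\lambda_0)\le tP_{\varphi_*,w}(h/\lambda_0)$ becomes strict'' is unsupported. The inequality you need is nevertheless true for every $N$-function and every $u>0$, $t\in(0,1)$: if $\varphi_*(tu)=t\varphi_*(u)$, then since $s\mapsto\varphi_*(s)/s$ is nondecreasing and convexity orders the chord slopes, $\varphi_*$ must equal $cs$ on all of $[0,u]$ with $c=\varphi_*(u)/u>0$, contradicting $\lim_{s\to0}\varphi_*(s)/s=0$. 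This is exactly the inequality the paper invokes. With this substitution one gets $P_{\varphi_*,w}(th/\lambda_0)<tP_{\varphi_*,w}(h/\lambda_0)$ for every nonzero simple $h$ (the pointwise strict inequality holds wherever $(h^*)^0/w>0$, a set of positive measure), and your argument closes, provided you also record that $\mu\mapsto P_{\varphi_*,w}(\mu h)+\mu\|S\|$ is convex and finite on $[0,1/\lambda_0]$, hence continuous at $1/(\lambda_0+\|S\|)$, so that the strict inequality in the constraint persists for $\lambda$ slightly below $\lambda_0+\|S\|$ and the infimum in Theorem \ref{Orlicz} is genuinely smaller than $\|H\|+\|S\|$.
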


\begin{proof}

We give a proof only in the case of function spaces. 
Let $\varphi$ be an Orlicz $N$-function, which does not satisfy the appropriate $\Delta_2$ condition. Then $(\Lambda_{\varphi,w}^0)_a$ is a proper subspace of $\Lambda_{\varphi,w}^0$, and in view of Theorem \ref{Orlicz} there exists $S \in (\Lambda_{\varphi,w}^0)^*$ such that $S \neq 0$. So, choose $S \in (\Lambda_{\varphi,w}^0)^*$ such that $0 < \|S\| < 1$.  We show that there exist $u>0$ and $0 < t_0 < \gamma$ such that $h = uw\chi_{(0,t_0)}$  and $\|h\|_{\mathcal{M}_{\varphi_*,w}} + \|S\| =1$. Indeed choose $u$ satisfying $\varphi_*(u) > 1/W(\gamma)$, where $1/W(\infty) = 0$. Then  $\frac{1}{\varphi_*(u/(1-\|S\|))} < W(\gamma)$. Since $W$ is continuous on $(0, \gamma)$, there exists $0 < t_0  < \gamma$ such that  $W(t_0) = \frac{1}{\varphi_*(u/(1-\|S\|))}$. Let $h = uw\chi_{(0,t_0)}$ for such $u$ and $t_0$.
Clearly $h$ is a decreasing function. Furthermore, the interval $(0, t_0)$ is its maximal level interval since $R(0, t) = \frac{uW(t)}{W(t)} =  \frac{uW(t_0)}{W(t_0)} = R(0,t_0) = u$ for all $0 < t < t_0$, and $R(0,t_0) < R(0,t)$ for $\gamma> t> t_0$ . Hence $\frac{h^0}{w} =  u \chi_{(0, t_0)}$, and so $P_{\varphi_*,w}(h) = \int_I \varphi_*\left(\frac{h^0}{w}\right)w = \varphi_*(u)W(t_0)$. It follows that

\begin{eqnarray*}
\|h\|_{\mathcal{M}_{\varphi_*,w}} &=& \inf\left\{\epsilon > 0 : P_{\varphi_*,w}\left(\frac{h}{\epsilon}\right) \leq 1\right\} =  \inf\left\{\epsilon > 0 : \varphi_* \left(\frac{u}{\epsilon}\right) \leq \frac{1}{W(t_0)}\right\}\\
&=& \inf\left\{\epsilon > 0 : \varphi_*\left(\frac{u}{\epsilon}\right) \leq \varphi_*\left(\frac{u}{1-\|S\|}\right)\right\}
= \inf\{\epsilon > 0 : \epsilon \geq 1 - \|S\|\} = 1 - \|S\|.
\end{eqnarray*}
Thus, we have $\|h\|_{\mathcal{M}_{\varphi_*,w}} + \|S\| = 1$, which implies that $P_{\varphi_*,w} (\frac{h}{1 - \|S\|}) \leq 1$.  Now since $\varphi$ is $N$-function, $\varphi_*$ is also an $N$-function, and so $\varphi_*$ is not identical to a linear function $ku$ for any $k>0$. Hence for all $u>0$,  $\lambda> 1$ we have $\varphi_*(\lambda u) > \lambda \varphi_*(u)$. Therefore by $\frac{1}{1-\|S\|} > 1$,
\[
1 \geq P_{\varphi_*,w}\left(\frac{h}{1-\|S\|}\right) = \varphi_*\left(\frac{u}{1-\|S\|}\right)W(t_0) > \frac{1}{1 - \|S\|} P_{\varphi_*,w}(h),
\]
 which shows that
 \begin{equation}\label{eq:00}
 P_{\varphi_*,w}(h) < 1 - \|S\| = \|h\|_{\mathcal{M}_{\varphi_*,w}}.
 \end{equation}
On the other hand if we assume  that $(\Lambda_{\varphi,w}^0)_a$ is an $M$-ideal of $\Lambda_{\varphi,w}^0$ then, $1 =\|H + S\| = \|h\|_{\mathcal{M}_{\varphi_*,w}} + \|S\| \geq P_{\varphi_*,w}(h) + \|S\|$. It follows  that $P_{\varphi_*,w}(h) + \|S\| = 1$. Indeed, suppose that $P_{\varphi_*,w}(h) + \|S\| < 1$. Define $g(\lambda) = P_{\varphi_*,w}(\lambda h) + \lambda \|S\|$ for $\lambda > 0$. The function $g$ is convex, $g(0) = 0$, and $\lim_{\lambda \rightarrow \infty} g(\lambda) = \infty$. Since $g(1) = P_{\varphi_*,w}(h) + \|S\| < 1$, there exists $\frac{1}{\lambda_0} > 1$ such that $P_{\varphi_*,w}\left(\frac{h}{\lambda_0}\right) + \frac{1}{\lambda_0}\|S\| = 1$. But then, from Theorem \ref{Orlicz}, we have $1 = \|H+S\| =  \inf\{\lambda>0 : P_{\varphi_*,w}(\frac{h}{\lambda}) + \frac{1}{\lambda}\|S\| \leq 1\} > 1$, which is a contradiction.

However $P_{\varphi_*,w}(h) + \|S\| = 1$ contradicts  (\ref{eq:00}) and completes the proof.

\end{proof}

\end{document}